\theoremstyle{plain}
\newtheorem{theorem}{Theorem}[section]
\newtheorem{proposition}{Proposition}[section]
\newtheorem{corollary}{Corollary}[section]
\newtheorem{lemma}{Lemma}[section]
\newtheorem{remark}{\bf Remark}[section]
\theoremstyle{definition}
\newcommand{\rot}{\mathop\mathrm{rot}}
\newcommand{\grad}{\mathop\mathrm{grad}}
\renewcommand{\div}{\mathop\mathrm{div}}
\newcommand{\Tr}{\mathop\mathrm{Tr}}
\def\({\left(}
\def\){\right)}
\def\Tr{\operatorname{Tr}}
\begin{document}
 \title[Regularized Euler equations on the sphere]
 { Dimension estimates for the attractor  of the regularized damped Euler
 equations on the sphere}
\author[ A. Ilyin,  A. Kostianko, and  S. Zelik] {
Alexei Ilyin${}^1$, Anna Kostianko${}^{3,4}$, and Sergey
Zelik${}^{2,3}$}

\subjclass[2000]{35B40, 35B45, 35L70}

\keywords{Regularized Euler equations, Bardina model,  attractors, fractal dimension,
 spectral inequalities on the sphere}

\thanks{
The second  author was  partially supported  by the Leverhulme grant No. RPG-2021-072 (United Kingdom).}

\email{ilyin@keldysh.ru}
\email{a.kostianko@imperial.ac.uk}
\email{s.zelik@surrey.ac.uk}
\address{${}^1$ Keldysh Institute of Applied Mathematics, Moscow, Russia}
\address{${}^2$ University of Surrey, Department of Mathematics, Guildford, GU2 7XH, United Kingdom.}
\address{${}^3$ \phantom{e}School of Mathematics and Statistics, Lanzhou University, Lanzhou\\ 730000,
P.R. China}
\address{${}^4$  Imperial College, London SW7 2AZ, United Kingdom.}

\begin{abstract}
We prove  existence of the global attractor of the
damped and driven    Euler--Bardina equations
on the 2D sphere and on arbitrary domains on the sphere
 and give  explicit  estimates of its fractal dimension
in terms of the physical parameters.
\end{abstract}

\maketitle

\setcounter{equation}{0}
 \section{Introduction and main result}

The following regularized Euler system has attracted considerable attention
over the last years
\begin{equation}\label{DEalpha-intro}
\left\{
  \begin{array}{ll}
    \partial_t u+(\bar u,\nabla_x)\bar u+\gamma u+\nabla_x p=g,\  \  \\
    \operatorname{div} \bar u=0,\quad u(0)=u_0,\quad  u=(1-\alpha\Delta_x)\bar u.
  \end{array}
\right.
\end{equation}
Here  $g$ is the forcing term, $\gamma u$ is the Ekman damping term that makes the
system dissipative, and  $\alpha>0$ is a small parameter
of dimension $(\textrm{length})^2$ so that
 $\bar u$ is a smoothed vector function with high
 spatial modes filtered out.

In 3D this system is studied as a subgrid scale model of turbulence
and is known in literature  as the inviscid Euler--Bardina model
 \cite{BFR80}. The asymptotic behavior of solutions and estimates for the number of the
degrees of freedom  for this model  and the similar Navier--Stokes--Voight
model were studied in~\cite{Bardina, Titi-Varga} (see also the references therein).

A comprehensive analysis of this system from the
point of view of attractors has recently been done in~\cite{Lap70, arxiv}.
Explicit upper bounds for the fractal dimension of system~\eqref{DEalpha-intro}
on the 2D torus were obtained in~\cite{Lap70} and, furthermore,
by the instability analysis on the corresponding generalized Kolmogorov flows
it was also shown there that the upper estimates are optimal in the limit $\alpha\to0$.

A more difficult 3D case was studied in detail in \cite{arxiv} for system
\eqref{DEalpha-intro} on the 3D torus and in a domain $\Omega\subseteq\mathbb{R}^3$,
where we again obtained explicit upper bounds for the attractor dimension and
the estimate for the 3D torus is also optimal as $\alpha\to0$.

Motivated by possible geophysical applications we study in this work
the regularized Euler--Bardina system on the 2D sphere and in proper
domains $\Omega$ on the sphere:
\begin{equation}\label{DEalpha}
\left\{
  \begin{array}{ll}
    \partial_t u+\nabla_{\bar u}\bar u+\gamma u+\nabla p=g,\  \  \\
    \operatorname{div} \bar u=0,\quad u(0)=u_0,
    \quad u=(1-\alpha\mathbf{\Delta})\bar u.
  \end{array}
\right.
\end{equation}
The corresponding phase space  with respect to
$\bar u$ is
\begin{equation}\label{H1}
\bar u\in\mathbf{H}^1:=\left\{
                       \begin{array}{ll}
                         \dot{\mathbf{H}}^1_0(\Omega),
                         & \Omega\varsubsetneq\mathbb{S}^2, \\
                         \mathbf{H}^1(\mathbb{S}^2), & \Omega=\mathbb{S}^2
                       \end{array}
                     \right.\qquad\operatorname{div}\bar u=0,
\end{equation}
and $\bar u=(1-\alpha\mathbf{\Delta})^{-1}u$ in the case of $\mathbb{S}^2$, while
for $\Omega\varsubsetneq\mathbb{S}^2$ we recover $\bar u$ by solving the Stokes problem
in $\Omega$:
$$
\bar u-\alpha\mathbf{\Delta}\bar u+\nabla q=u,\quad \div \bar u=0,\quad
\bar u\vert_{\partial\Omega}=0.
$$

In \eqref{DEalpha} $\nabla_uu$ is the covariant derivative of $u$
along $u$ for which we have~\cite{I90}
$$
\nabla_uu=\nabla\frac{u^2}2-u^\perp\rot u.
$$
In the vector case by the  Laplace operator acting on (tangent)
vector fields on $\mathbb{S}^2$ we mean  the Laplace--de Rham
operator $-d\delta-\delta d$ identifying $1$-forms and vectors.
Then for a two-dimensional manifold
 we have \cite{I90}
\begin{equation}\label{vecLap}
\mathbf{\Delta} u=\nabla\div u-\rot\rot u,
\end{equation}
where the  operators $\nabla=\grad$ and $\div$ have the
conventional meaning. The operator $\rot$ of a vector $u$ is a
scalar  and for a scalar $\psi$, $\rot\psi$ is a vector: $\rot
u:=\div(u^\perp)$, $\rot\psi:=\nabla^\perp\psi$, where  in the
local frame $u^\perp=(u_2,-u_1)$, that is, $\pi/2$ clockwise rotation
of $u$ in the local tangent plane. Integrating by parts we obtain
\begin{equation}\label{byparts}
(-\mathbf{\Delta} u,u)=\|\rot u\|^2_{L^2}+\|\div u\|^2_{L^2}.
\end{equation}

We can now state the main result of this work proved in
Section~\ref{S:bounds}.

 \begin{theorem}\label{Th:into-main}
Let $\Omega\subseteq\mathbb{S}^2$. The  regularized  Euler system
\eqref{DEalpha} has a global attractor  $\mathscr A$ in $\mathbf{H}^1$ with finite fractal
dimension satisfying the following upper bound
\begin{equation}\label{upper-main}
\dim_F{\mathscr A}\le\frac1{8\pi}\cdot\left\{
\aligned
&\frac1{\alpha\gamma^4}
\min\left(\|\operatorname{rot}g\|^2_{L^2},\ \frac{\|g\|^2_{L^2}}{2\alpha}\right),
\quad \Omega=\mathbb{S}^2\\
&\frac{\|g\|^2_{L^2}}{2\alpha^2\gamma^4},\quad \Omega\varsubsetneq\mathbb{S}^2.
\endaligned
\right.
\end{equation}
\end{theorem}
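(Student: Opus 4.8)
The plan is to obtain the attractor by routine dissipativity and compactness, and then to bound $\dim_F\mathscr A$ by the Constantin--Foias--Temam volume-contraction method. On the divergence-free part of $\mathbf{H}^1$ I would work with the energy inner product $\langle\bar v,\bar w\rangle:=(\bar v,\bar w)_{L^2}+\alpha(\rot\bar v,\rot\bar w)_{L^2}$, which by \eqref{byparts} equals $((1-\alpha\mathbf{\Delta})\bar v,\bar w)_{L^2}$ there and is equivalent to the standard norm, so $\dim_F$ may be computed in it. Writing \eqref{DEalpha} as $\partial_t\bar u=-\gamma\bar u+(1-\alpha\mathbf{\Delta})^{-1}\Pi(g-\nabla_{\bar u}\bar u)$ with $\Pi$ the Leray projection and noting that in two dimensions $\Pi\nabla_{\bar u}\bar u=-\Pi(\bar u^{\perp}\rot\bar u)$, the nonlinearity sends bounded subsets of $\mathbf{H}^1$ into bounded subsets of a space compactly embedded in $\mathbf{H}^1$; hence $S(t)$ is a compact perturbation of the linear contraction $e^{-\gamma t}$, which with the dissipative bound below yields a compact global attractor $\mathscr A$ in $\mathbf{H}^1$ on which $S(t)$ is uniformly differentiable.

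The uniform bounds on $\mathscr A$ come from energy and enstrophy. Testing \eqref{DEalpha} with $\bar u$ and using $(\nabla_{\bar u}\bar u,\bar u)=0$ (from $\nabla_{\bar u}\bar u=\nabla\tfrac{|\bar u|^2}2-\bar u^{\perp}\rot\bar u$ and $\div\bar u=0$) gives $\tfrac12\tfrac{d}{dt}E+\gamma E=(g,\bar u)$ with $E:=\|\bar u\|^2_{L^2}+\alpha\|\rot\bar u\|^2_{L^2}$; keeping the damping term $\gamma\alpha\|\rot\bar u\|^2_{L^2}$ explicitly and estimating $(g,\bar u)\le\tfrac1{2\gamma}\|g\|^2_{L^2}+\tfrac\gamma2\|\bar u\|^2_{L^2}$ leads, after time-averaging, to $\limsup_{T\to\infty}\tfrac1T\int_0^T\|\rot\bar u(t)\|^2_{L^2}\,dt\le\|g\|^2_{L^2}/(2\alpha\gamma^2)$, uniformly over $\mathscr A$. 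On $\Omega=\mathbb{S}^2$ there is a second bound from the scalar vorticity $\bar\omega:=\rot\bar u$: since $\rot$ commutes with $\mathbf{\Delta}$, $\bar\omega$ solves $(1-\alpha\Delta)\partial_t\bar\omega+\bar u\cdot\nabla\bar\omega+\gamma(1-\alpha\Delta)\bar\omega=\rot g$ (with $\Delta$ the scalar Laplacian), and testing with $\bar\omega$ (using $(\bar u\cdot\nabla\bar\omega,\bar\omega)=0$) gives $\tfrac12\tfrac{d}{dt}\mathcal E+\gamma\mathcal E=(\rot g,\bar\omega)$ with $\mathcal E:=\|\bar\omega\|^2_{L^2}+\alpha\|\nabla\bar\omega\|^2_{L^2}$, whence $\|\rot\bar u\|^2_{L^2}\le\mathcal E\le\|\rot g\|^2_{L^2}/\gamma^2$ on $\mathscr A$. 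Thus the time-averaged quantity $\kappa:=\limsup_{T\to\infty}\tfrac1T\int_0^T\|\rot\bar u(t)\|^2_{L^2}\,dt$ satisfies $\kappa\le\gamma^{-2}\min(\|\rot g\|^2_{L^2},\|g\|^2_{L^2}/(2\alpha))$ on $\mathbb{S}^2$, while for $\Omega\subsetneq\mathbb{S}^2$ the vorticity estimate fails ($\rot\bar u$ has no boundary condition) and only $\kappa\le\|g\|^2_{L^2}/(2\alpha^2\gamma^2)$ survives.

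The heart of the matter is the volume-contraction estimate. Linearizing \eqref{DEalpha} along a trajectory $\bar u(t)$, a divergence-free variation $\bar v$ solves $\partial_t\bar v=\mathbb L(t)\bar v:=-\gamma\bar v-(1-\alpha\mathbf{\Delta})^{-1}\Pi(\nabla_{\bar v}\bar u+\nabla_{\bar u}\bar v)$, and using $\langle(1-\alpha\mathbf{\Delta})^{-1}\phi,\bar v\rangle=(\phi,\bar v)_{L^2}$ for divergence-free $\bar v$ and $(\nabla_{\bar u}\bar v,\bar v)=0$, for any $\bar v_1,\dots,\bar v_n$ orthonormal in $\langle\cdot,\cdot\rangle$ with rank-$n$ projector $Q_n$ onto their span,
\[
\Tr(\mathbb L(t)\circ Q_n)=-\gamma n-\sum_{j=1}^n(\nabla_{\bar v_j}\bar u,\bar v_j)_{L^2}.
\]
The nonlinear sum is controlled by the deformation tensor $\mathrm{Def}\,\bar u$ (the symmetric part of $\nabla\bar u$): being trace-free on a surface it has pointwise operator norm $2^{-1/2}|\mathrm{Def}\,\bar u|$ (Frobenius norm), so $\nabla_{\bar v_j}\bar u\cdot\bar v_j\le2^{-1/2}|\mathrm{Def}\,\bar u|\,|\bar v_j|^2$ pointwise, while the Weitzenb\"ock formula together with $\div\bar u=0$ gives $\|\mathrm{Def}\,\bar u\|^2_{L^2}=\tfrac12\|\rot\bar u\|^2_{L^2}-\|\bar u\|^2_{L^2}\le\tfrac12\|\rot\bar u\|^2_{L^2}$; hence, with $\rho:=\sum_{j=1}^n|\bar v_j|^2$, one gets $\bigl|\sum_j(\nabla_{\bar v_j}\bar u,\bar v_j)\bigr|\le\tfrac12\|\rot\bar u\|_{L^2}\|\rho\|_{L^2}$. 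The remaining ingredient is the Lieb--Thirring-type spectral inequality on $\mathbb{S}^2$ (and on $\Omega$) for divergence-free families orthonormal in $\langle\cdot,\cdot\rangle$,
\[
\bigl\|\textstyle\sum_{j=1}^n|\bar v_j|^2\bigr\|^2_{L^2}\le\frac{\mathrm c}{\alpha}\,n,
\]
whose sharp constant $\mathrm c$ is exactly what produces the prefactor $1/(8\pi)$ in \eqref{upper-main}. Combining these, along any trajectory on $\mathscr A$ one obtains $\Tr(\mathbb L(t)\circ Q_n)\le-\gamma n+\tfrac12\sqrt{\mathrm c/\alpha}\,\|\rot\bar u(t)\|_{L^2}\sqrt n$.

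Finally, time-averaging and applying Cauchy--Schwarz, the trace function satisfies $q_n\le-\gamma n+\tfrac12\sqrt{\mathrm c\,\kappa/\alpha}\,\sqrt n$, a concave function of $n$ which is negative as soon as $n>\mathrm c\,\kappa/(4\alpha\gamma^2)$; the Constantin--Foias--Temam dimension theorem, in the sharp form using concavity of the trace majorant (Chepyzhov--Ilyin; Temam), then gives $\dim_F\mathscr A\le\mathrm c\,\kappa/(4\alpha\gamma^2)$, which with the explicit values of $\kappa$ from the second step and of $\mathrm c$ is exactly \eqref{upper-main}. The principal obstacle is not the dynamics but this Lieb--Thirring inequality on $\mathbb{S}^2$ and on its Dirichlet subdomains with the optimal constant; establishing it is where the real work lies.
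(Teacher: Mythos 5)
Your proposal follows the paper's own route almost step for step: the same $\alpha$-weighted inner product $((1-\alpha\mathbf{\Delta})\cdot,\cdot)$, the same energy and (on the full sphere) enstrophy estimates giving the time-averaged control of $\|\rot\bar u\|$, the same linearized trace computation in which the $\gamma$-term contributes $-\gamma n$ exactly and the nonlinearity is reduced to $\sum_j(\nabla_{\bar v_j}\bar u,\bar v_j)_{L^2}$, and the same two key inequalities (a trilinear bound by $\|\rot\bar u\|_{L^2}\|\rho\|_{L^2}$ and a collective Lieb--Thirring bound for $\|\rho\|_{L^2}$). However, the single ingredient that actually produces the explicit constant $1/(8\pi)$ --- the inequality $\|\rho\|_{L^2}^2\le \mathrm{c}\,n/\alpha$ with $\mathrm{c}=1/(4\pi)$ for divergence-free families orthonormal in $(\cdot,\cdot)_\alpha$, on $\mathbb{S}^2$ and on its Dirichlet subdomains --- is exactly what you do not prove, and you say so yourself. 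This is the entire content of the paper's Appendix: the operators $\mathbb{H},\mathbf{K}$, the Araki--Lieb--Thirring trace inequality, the identity $\sum_k|w_n^k(s)|^2=(2n+1)/(4\pi)$ for the divergence-free eigenbasis, the extension-by-zero reduction of $\Omega\varsubsetneq\mathbb{S}^2$ to the whole sphere, and the genuinely delicate Proposition~\ref{Prop:seriesS2} showing $m^2\sum_{n\ge1}(2n+1)(m^2+n(n+1))^{-2}<1$ for all $m>0$. Without this, Theorem~\ref{Th:into-main} with its stated constant is not established; as a proof of the theorem the proposal therefore has a genuine gap at its quantitative core.

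There is also a constant-tracking problem that prevents your chain of estimates from closing to \eqref{upper-main} even granting the Lieb--Thirring step. You claim $\bigl|\sum_j(\nabla_{\bar v_j}\bar u,\bar v_j)\bigr|\le\tfrac12\|\rot\bar u\|_{L^2}\|\rho\|_{L^2}$ via the deformation tensor and an (unproved) Weitzenb\"ock identity on the curved sphere; the paper instead uses the inequality with constant $2^{-1/2}$ from \cite[Lemma 3.2]{IMT}. Run your own arithmetic to the end: with constant $\tfrac12$ and $\mathrm{c}=1/(4\pi)$ the threshold is $n^*=\mathrm{c}\kappa/(4\alpha\gamma^2)$, which gives $1/(16\pi)$ in place of $1/(8\pi)$ --- so your final claim that the computation ``is exactly \eqref{upper-main}'' is false as stated; the factor $2^{-1/2}$ is what makes the constants match (and is consistent with the sharpness of the torus bound asserted in \cite{Lap70}, which a $1/(16\pi)$ upper bound would contradict). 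You should either cite the $2^{-1/2}$ inequality or supply a complete proof of your stronger claim; as written it is asserted, not proved, and it conflicts with your own conclusion. Separately, in the subdomain case you state $\kappa\le\|g\|_{L^2}^2/(2\alpha^2\gamma^2)$, which contradicts the bound $\kappa\le\|g\|_{L^2}^2/(2\alpha\gamma^2)$ you correctly derived two lines earlier; taken literally it would produce the wrong power $\alpha^{-3}$ in the final estimate, so the extra $\alpha$ must come from the $n^*$ formula, not from $\kappa$. The remaining structural elements (existence of the attractor, differentiability of $S(t)$, the Constantin--Foias--Temam/Chepyzhov--Ilyin machinery) are handled essentially as in the paper and are fine.
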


In Section~\ref{S:attr} we prove dissipative estimates, write the system as an ODE in
$\mathbf{H}^1$ with  bounded nonlinearity and construct the global attractor.
In the Appendix (which is of an independent interest)
we prove in the spirit of \cite{LiebJFA}  collective Sobolev inequalities
on the sphere for families of functions with orthonormal derivatives.

We point out in conclusion that estimates~\eqref{upper-main} are \emph{exactly}
the same as those in the case of the 2D torus $\mathbb{T}^2$ (and $\mathbb{R}^2$),
and $\Omega\varsubsetneq\mathbb{R}^2$, see, respectively~\cite{Lap70,arxiv}. Furthermore,
since as shown in \cite{Lap70} the estimate on $\mathbb{T}^2$ is optimal, we have a strong
evidence that it is also true for $\mathbb{S}^2$.

\setcounter{equation}{0}
\section{A priori estimates and the global attractor}\label{S:attr}

Before we prove two types of energy estimates
we recall the following two orthogonality relations~\cite{I90}.

\begin{lemma}\label{L:orth}
Let $\Omega\subseteq\mathbb{S}^2$ and
let $u,v\in T\mathbb{S}^2$, $\div u=0$ be smooth vector functions in $\Omega$.
In case when $\Omega\subsetneq\mathbb{S}^2$ we further suppose that
$u\vert_{\partial\Omega}=0$.
Then
\begin{equation}\label{S2orth}
\aligned
\int_\Omega(\nabla_uv\cdot v)dS=0,\quad
\int_{\mathbb{S}^2}(\nabla_uu\cdot \mathbf{\Delta}u)dS=0.
\endaligned
\end{equation}
\end{lemma}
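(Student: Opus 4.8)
The plan is to reduce both identities to the single elementary fact that
$\int_\Omega u\cdot\nabla\varphi\,dS=0$ for every smooth scalar $\varphi$ on $\Omega$: this follows by
integration by parts from $\div u=0$, the boundary flux $\int_{\partial\Omega}(u\cdot n)\varphi\,dS$
being absent when $\Omega=\mathbb{S}^2$ and vanishing because $u\vert_{\partial\Omega}=0$ when
$\Omega\varsubsetneq\mathbb{S}^2$.

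For the first relation I would use the metric compatibility of the Levi--Civita connection: applied to
the scalar $|v|^2$, the operator $\nabla_u$ is just the directional derivative $u\cdot\nabla$, so that
$\nabla_u v\cdot v=\tfrac12\,\nabla_u(|v|^2)=\tfrac12\,u\cdot\nabla(|v|^2)$. Integrating over $\Omega$ and
invoking the displayed fact with $\varphi=\tfrac12|v|^2$ gives $\int_\Omega(\nabla_u v\cdot v)\,dS=0$.

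For the second relation I would first exploit $\div u=0$ together with \eqref{vecLap} to write
$\mathbf{\Delta}u=-\rot\rot u=-\rot\omega=-\nabla^\perp\omega$, where $\omega:=\rot u$ is a scalar, and
then combine this with the identity for $\nabla_u u$ recalled above the statement:
$$
\int_{\mathbb{S}^2}(\nabla_u u\cdot\mathbf{\Delta}u)\,dS
=-\int_{\mathbb{S}^2}\nabla\tfrac{|u|^2}2\cdot\nabla^\perp\omega\,dS
+\int_{\mathbb{S}^2}\omega\,(u^\perp\cdot\nabla^\perp\omega)\,dS.
$$
The first integral vanishes because $\nabla f\cdot\nabla^\perp g=\div(f\,\nabla^\perp g)-f\,\div(\nabla^\perp g)$
and $\div(\nabla^\perp g)=\div(\rot g)=0$, so its integrand is a pure divergence on the closed surface
$\mathbb{S}^2$. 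In the second integral the pointwise rotation $w\mapsto w^\perp$ is an isometry of each
tangent plane, hence $u^\perp\cdot\nabla^\perp\omega=u\cdot\nabla\omega$; thus the integrand equals
$u\cdot\nabla(\tfrac12\omega^2)$ and the displayed fact with $\varphi=\tfrac12\omega^2$ makes it vanish as well.

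The computations are routine; the step most prone to error is the sign and identity bookkeeping on
$\mathbb{S}^2$ --- namely $\div\rot=0$ (equivalently $\rot\grad=0$) for the scalar/vector versions of $\rot$,
the orthogonality $u^\perp\cdot w^\perp=u\cdot w$, and the reduction $\mathbf{\Delta}u=-\nabla^\perp\rot u$
coming from \eqref{vecLap} --- together with checking that the boundary term in the first identity really
drops under the stated hypotheses (where in fact $u\cdot n=0$ on $\partial\Omega$ would already suffice).
Beyond keeping these conventions consistent I do not expect any genuine obstacle.
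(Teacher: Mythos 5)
Your proof is correct and follows essentially the same route as the paper: both identities are reduced to $\int_\Omega u\cdot\nabla\varphi\,dS=0$ for divergence-free $u$, with the second handled via $\mathbf{\Delta}u=-\nabla^\perp\omega$ and the decomposition $\nabla_uu=\nabla\tfrac{u^2}2-\omega u^\perp$ exactly as in the paper. The only cosmetic difference is that you justify $\nabla_uv\cdot v=\tfrac12\,u\cdot\nabla|v|^2$ intrinsically via metric compatibility of the Levi--Civita connection, whereas the paper derives it extrinsically by prolonging $u,v$ into $\mathbb{R}^3$ and projecting the ambient derivative onto the tangent plane.
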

\begin{proof}
To prove the first identity we use the following result
in differential geometry, see, for instance, \cite{Dubrovin}.
Namely, suppose that  $M$ is a surface in $\mathbb{R}^d$ and let
$u,v$ be tangent vector functions on $M$. Let $u$ and $v$ be
somehow prolonged  in  a neighborhood of $M$
in $\mathbb{R}^d$ with fixed Cartesian system. We denote them as
$\tilde u$, $\tilde v$.
Then at a point $x\in M$
$$
\nabla_uv(x)=\pi \left(\sum_{i=1}^d\tilde u_i\partial_i\tilde v_j\right),
$$
where $\pi$ is the projection on the tangent plane to $M$ at $x$.

Using this we obtain taking into account that $u$ is tangent
$$
\aligned
\int_\Omega(\nabla_uv\cdot v)dS=
\int_\Omega\sum_{i,j=1}^3\tilde u_i\partial_i\tilde v_j\tilde v_jdS=
\frac12\int_\Omega\sum_{i,j=1}^3\tilde u_i\partial_i(\tilde v_j)^2dS=\\=
\frac12\int_\Omega\sum_{j=1}^3u\cdot\nabla(v_j)^2dS=
-\frac12\int_\Omega\sum_{j=1}^3 (v_j)^2\div u\,dS=0.
\endaligned
$$
We point out that $\nabla$ and $\div$ here are the
\emph{surface} gradient and divergence.

The second identity follows since setting $\omega=\rot u$  we have
$$
\int_{\mathbb{S}^2}\nabla_uu\cdot \mathbf{\Delta}udS=
\int_{\mathbb{S}^2}\left(\frac12\nabla u^2-\omega u^\perp\right)
\cdot\nabla^\perp\omega dS=-
\int_{\mathbb{S}^2}\frac12u\cdot\nabla(\omega)^2dS=0.
$$
\end{proof}

\begin{proposition}\label{Prop:2.1} Let $u$ be a smooth solution of equation \eqref{DEalpha}.
Then for any $\Omega\subseteq\mathbb{S}^2$ the following dissipative energy estimate holds:
\begin{equation}\label{energy}
\|\bar u(t)\|^2_\alpha\le \|\bar u(0)\|^2_\alpha e^{-\gamma t}+\frac1{\gamma^2}\|g\|^2_{L^2},
\end{equation}
where
$$
\|\bar u\|^2_\alpha:=\|\bar u\|^2_{L^2}+\alpha\|\rot\bar u\|^2_{L^2}.
$$
For $\Omega=\mathbb{S}^2$ estimate \eqref{energy} still holds and,
in addition,
\begin{equation}\label{enstrophy}
\|\bar \omega(t)\|^2_\alpha\le \|\bar \omega(0)\|^2_\alpha e^{-\gamma t}+
\frac1{\gamma^2}\|\rot g\|^ 2_{L^2},
\end{equation}
where $\omega=\rot u$, $\bar\omega=\rot \bar u$,
$\omega=(1-\alpha\Delta)\bar\omega$  and
$$
\|\bar \omega\|^2_\alpha:=\|\bar \omega\|^2_{L^2}+\alpha\|\nabla
\bar \omega\|^2_{L^2}.
$$
\end{proposition}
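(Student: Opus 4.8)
The plan is to derive both bounds as \emph{a priori} estimates on the smooth solution of the hypothesis by taking the $L^2$ scalar product of \eqref{DEalpha} — and, for the second bound, of its vorticity form — with a suitable function, and then closing with Young's inequality and Gronwall's lemma; all the geometry needed is already contained in Lemma~\ref{L:orth} and the identity \eqref{byparts}.

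\emph{The energy estimate \eqref{energy}.} I would first take the $L^2(\Omega)$ scalar product of the first equation in \eqref{DEalpha} with $\bar u$. The pressure term drops, $(\nabla p,\bar u)=-(p,\div\bar u)=0$, with no boundary contribution (for $\Omega\subsetneq\mathbb S^2$ because $\bar u|_{\partial\Omega}=0$, and there is no boundary when $\Omega=\mathbb S^2$), and the inertial term $(\nabla_{\bar u}\bar u,\bar u)$ vanishes by the first identity of Lemma~\ref{L:orth} with $u=v=\bar u$. For the damping term, since $u=(1-\alpha\mathbf{\Delta})\bar u$ and $\div\bar u=0$, identity \eqref{byparts} gives $(u,\bar u)=\|\bar u\|_{L^2}^2+\alpha\|\rot\bar u\|_{L^2}^2=\|\bar u\|_\alpha^2$; the analogous computation, using self-adjointness of $\mathbf{\Delta}$ (its boundary terms again killed by $\bar u|_{\partial\Omega}=\partial_t\bar u|_{\partial\Omega}=0$ when there is a boundary), gives $(\partial_t u,\bar u)=\tfrac12\frac{d}{dt}\|\bar u\|_\alpha^2$. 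Hence $\tfrac12\frac{d}{dt}\|\bar u\|_\alpha^2+\gamma\|\bar u\|_\alpha^2=(g,\bar u)\le\tfrac\gamma2\|\bar u\|_{L^2}^2+\tfrac1{2\gamma}\|g\|_{L^2}^2\le\tfrac\gamma2\|\bar u\|_\alpha^2+\tfrac1{2\gamma}\|g\|_{L^2}^2$, so that $\frac{d}{dt}\|\bar u\|_\alpha^2+\gamma\|\bar u\|_\alpha^2\le\tfrac1\gamma\|g\|_{L^2}^2$, and Gronwall's inequality yields \eqref{energy}.

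\emph{The enstrophy estimate \eqref{enstrophy} on $\mathbb S^2$.} Here I would pass to the vorticity formulation. Applying $\rot$ to the first equation annihilates the pressure, $\rot\nabla p=0$; since the Laplace--de Rham operator commutes with $d$ and $\delta$ (hence with $\rot$), $\rot$ intertwines $\mathbf{\Delta}$ with the scalar Laplacian $\Delta$, so $\rot u=(1-\alpha\Delta)\rot\bar u$, i.e. $\omega=(1-\alpha\Delta)\bar\omega$ as stated. For the nonlinearity, writing $\nabla_{\bar u}\bar u=\nabla\tfrac12\bar u^2-\bar u^\perp\bar\omega$ and using $\rot\nabla(\cdot)=0$, $\rot(\psi V)=\psi\,\rot V+\nabla\psi\cdot V^\perp$, $(\bar u^\perp)^\perp=-\bar u$ and $\rot(\bar u^\perp)=-\div\bar u=0$, one obtains $\rot\nabla_{\bar u}\bar u=-\rot(\bar u^\perp\bar\omega)=\bar u\cdot\nabla\bar\omega$. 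Thus $\partial_t\omega+\bar u\cdot\nabla\bar\omega+\gamma\omega=\rot g$ on $\mathbb S^2$. Taking the $L^2(\mathbb S^2)$ scalar product with $\bar\omega$, the transport term vanishes, $\int_{\mathbb S^2}(\bar u\cdot\nabla\bar\omega)\,\bar\omega\,dS=-\tfrac12\int_{\mathbb S^2}\bar\omega^2\div\bar u\,dS=0$ (the scalar counterpart of the second identity of Lemma~\ref{L:orth}), while $(\omega,\bar\omega)=\|\bar\omega\|_{L^2}^2+\alpha\|\nabla\bar\omega\|_{L^2}^2=\|\bar\omega\|_\alpha^2$ and $(\partial_t\omega,\bar\omega)=\tfrac12\frac{d}{dt}\|\bar\omega\|_\alpha^2$. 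Therefore $\tfrac12\frac{d}{dt}\|\bar\omega\|_\alpha^2+\gamma\|\bar\omega\|_\alpha^2=(\rot g,\bar\omega)$, and Young's inequality together with Gronwall give \eqref{enstrophy}. (Equivalently, one may test \eqref{DEalpha} directly with $-\mathbf{\Delta}\bar u$, the inertial term then vanishing by the \emph{second} identity of Lemma~\ref{L:orth}.)

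The energy estimate is essentially bookkeeping once Lemma~\ref{L:orth} and \eqref{byparts} are in hand. The delicate point is the enstrophy estimate: one must verify that $\rot$ intertwines the vector and scalar regularizations $(1-\alpha\mathbf{\Delta})$ and $(1-\alpha\Delta)$, and that passing to the vorticity equation produces no boundary terms. This is precisely why this estimate is restricted to $\Omega=\mathbb S^2$: on a proper subdomain, $\rot$ of the equation generates uncontrolled boundary contributions and $\bar\omega$ carries no usable boundary condition, so the enstrophy bound genuinely relies on the absence of $\partial\Omega$.
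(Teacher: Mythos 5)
Your proof is correct and follows essentially the same route as the paper: test with $\bar u$ for the energy estimate, use the orthogonality identities of Lemma~\ref{L:orth} together with \eqref{byparts}, and close with Young's inequality and Gronwall. For the enstrophy bound the paper tests \eqref{DEalpha} directly with $-\mathbf{\Delta}\bar u$ (the alternative you mention parenthetically) rather than passing to the scalar vorticity equation, but the two computations coincide in substance, since the second identity in \eqref{S2orth} is itself proved by reducing to exactly the transport-term cancellation you use.
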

\begin{proof} Indeed, taking the scalar product of \eqref{DEalpha} and  $\bar u$,
 using \eqref{byparts}, that is,
$$
(u,\bar u)=\|\bar u\|_{L^2}^2+\|\rot\bar u\|_{L^2}^2
$$
and the first identity in~\eqref{S2orth}, we obtain
\begin{multline}\label{2.en-est}
\frac d{dt}\left(\|\bar u\|^2_{L^2}+\alpha\|\rot\bar u\|^2_{L^2}\right)+
2\gamma\left(\|\bar u\|^2_{L^2}+\alpha\|\rot\bar u\|^2_{L^2}\right)=2(g,\bar u)\le\\\le
2\|g\|_{L^2}\|\bar u\|_{L^2}\le
\gamma\|\bar u\|^2_{L^2}+\frac1\gamma\|g\|_{L^2}^2.
\end{multline}
Applying the Gronwall inequality, we obtain estimate \eqref{energy}.

The proof of~\eqref{enstrophy} is similar, and we take the scalar
product of \eqref{DEalpha} and  $-\mathbf{\Delta}\bar u$ instead.
Using the second orthogonality relation  we obtain
\begin{multline*}
\frac d{dt}\left(\|\bar \omega\|^2_{L^2}+\alpha\|\nabla\bar \omega\|^2_{L^2}\right)+
2\gamma\left(\|\bar \omega\|^2_{L^2}+\alpha\|\nabla\bar\omega\|^2_{L^2}\right)=
2(g,\rot\rot\bar u)=\\=2(\rot g,\bar \omega)
\le
\gamma\|\bar \omega\|^2_{L^2}+\frac1\gamma\|\rot g\|_{L^2}^2
\end{multline*}
and complete the proof as before.
\end{proof}

The following time averaged estimates are essential in
Section~\ref{S:bounds}. 

\begin{corollary}
For any $\Omega\varsubsetneq\mathbb{S}^2$
\begin{equation}\label{timenotS2}
\limsup_{t\to\infty}\frac1t\int_0^t\|\rot \bar u(s)\|_{L^2}\,ds\le \frac1{\gamma\sqrt{2\alpha}}\|g\|_{L^2}.
\end{equation}
For $\Omega=\mathbb{S}^2$ estimate \eqref{timenotS2} still holds and,
in addition,
\begin{equation}\label{timeS2}
\limsup_{t\to\infty}\frac1t\int_0^t\|\rot \bar u(s)\|_{L^2}\,ds\le
\frac1{\gamma}\|\rot g\|_{L^2}.
\end{equation}
\end{corollary}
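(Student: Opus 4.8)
The plan is to extract the two long-time averages directly from the \emph{differential} energy balances used in the proof of Proposition~\ref{Prop:2.1}, rather than from the integrated estimates \eqref{energy}, \eqref{enstrophy} themselves. For \eqref{timenotS2}, which holds on any $\Omega\subseteq\mathbb{S}^2$, I start from the balance appearing in \eqref{2.en-est},
$$
\frac d{dt}\big(\|\bar u\|^2_{L^2}+\alpha\|\rot\bar u\|^2_{L^2}\big)+2\gamma\big(\|\bar u\|^2_{L^2}+\alpha\|\rot\bar u\|^2_{L^2}\big)=2(g,\bar u),
$$
and bound the right-hand side by the same Young inequality used there, $2(g,\bar u)\le\gamma\|\bar u\|^2_{L^2}+\gamma^{-1}\|g\|^2_{L^2}$. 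This cancels one copy of $\gamma\|\bar u\|^2_{L^2}$ of the damping; discarding the remaining nonnegative term $\gamma\|\bar u\|^2_{L^2}$ leaves
$$
\frac d{dt}\|\bar u\|^2_\alpha+2\gamma\alpha\|\rot\bar u\|^2_{L^2}\le\gamma^{-1}\|g\|^2_{L^2}.
$$

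Integrating over $[0,t]$ and dropping the nonnegative quantity $\|\bar u(t)\|^2_\alpha$ on the left yields
$$
2\gamma\alpha\int_0^t\|\rot\bar u(s)\|^2_{L^2}\,ds\le\|\bar u(0)\|^2_\alpha+\gamma^{-1}t\,\|g\|^2_{L^2},
$$
so dividing by $t$ and letting $t\to\infty$ gives $\limsup_{t\to\infty}\frac1t\int_0^t\|\rot\bar u\|^2_{L^2}\,ds\le\frac{\|g\|^2_{L^2}}{2\gamma^2\alpha}$. The Cauchy--Schwarz inequality in time, $\big(\frac1t\int_0^t\|\rot\bar u\|_{L^2}\,ds\big)^2\le\frac1t\int_0^t\|\rot\bar u\|^2_{L^2}\,ds$, then produces the first power of the norm and hence \eqref{timenotS2}. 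For \eqref{timeS2}, available only on $\mathbb{S}^2$, I repeat the argument with the enstrophy balance from the same proof: setting $\bar\omega=\rot\bar u$,
$$
\frac d{dt}\big(\|\bar\omega\|^2_{L^2}+\alpha\|\nabla\bar\omega\|^2_{L^2}\big)+2\gamma\big(\|\bar\omega\|^2_{L^2}+\alpha\|\nabla\bar\omega\|^2_{L^2}\big)=2(\rot g,\bar\omega)\le\gamma\|\bar\omega\|^2_{L^2}+\gamma^{-1}\|\rot g\|^2_{L^2}.
$$
Here $\|\bar\omega\|_{L^2}=\|\rot\bar u\|_{L^2}$ already appears \emph{without} the factor $\alpha$, so after cancelling $\gamma\|\bar\omega\|^2_{L^2}$, discarding $2\gamma\alpha\|\nabla\bar\omega\|^2_{L^2}\ge0$, integrating over $[0,t]$ and dropping $\|\bar\omega(t)\|^2_\alpha\ge0$, I get $\gamma\int_0^t\|\rot\bar u\|^2_{L^2}\,ds\le\|\bar\omega(0)\|^2_\alpha+\gamma^{-1}t\,\|\rot g\|^2_{L^2}$; hence $\limsup_{t\to\infty}\frac1t\int_0^t\|\rot\bar u\|^2_{L^2}\,ds\le\gamma^{-2}\|\rot g\|^2_{L^2}$, and the Cauchy--Schwarz inequality in time again finishes the proof.

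There is no real obstacle here; everything is elementary once Proposition~\ref{Prop:2.1} is in hand. The one point deserving care is the bookkeeping of the dissipation: one must retain the \emph{full} coefficient $2\gamma\alpha$ in front of $\|\rot\bar u\|^2_{L^2}$ (respectively the full $\gamma$ in front of $\|\bar\omega\|^2_{L^2}$) while spending only the lossy split $2(g,\bar u)\le\gamma\|\bar u\|^2_{L^2}+\gamma^{-1}\|g\|^2_{L^2}$, since this is precisely what delivers the constants $1/(\gamma\sqrt{2\alpha})$ and $1/\gamma$. Passing instead through the already-proved pointwise bounds \eqref{energy}, \eqref{enstrophy} for $\|\bar u(t)\|^2_\alpha$ and $\|\bar\omega(t)\|^2_\alpha$ would only give the cruder constant $1/(\gamma\sqrt{\alpha})$ in \eqref{timenotS2}. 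Finally, the data terms $\|\bar u(0)\|^2_\alpha/t$ and $\|\bar\omega(0)\|^2_\alpha/t$ vanish as $t\to\infty$, so the $\limsup$ bounds are independent of $u_0$, as stated.
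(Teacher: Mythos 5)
Your proof is correct and follows essentially the same route as the paper: for \eqref{timenotS2} the paper likewise integrates \eqref{2.en-est} over $[0,t]$, divides by $t$, and applies the H\"older (Cauchy--Schwarz) inequality in time, yielding the same constant $1/(\gamma\sqrt{2\alpha})$. The only cosmetic difference is that for \eqref{timeS2} the paper reads the bound off directly from the pointwise decay estimate \eqref{enstrophy}, whereas you re-run the time-averaging argument on the enstrophy balance; both give the constant $1/\gamma$.
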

\begin{proof}
Estimate~\eqref{timeS2} immediately follows from~\eqref{enstrophy}.
To see that~\eqref{timenotS2} holds we integrate \eqref{2.en-est}
from $0$ to $t$, divide by $t$ and let $t\to\infty$. Since in view
of~\eqref{energy}, $\|\bar u(t)\|_\alpha$ is bounded, we obtain that
$$
\limsup_{t\to\infty}\frac1t\int_0^t\|\rot \bar u(s)\|^2_{L^2}\,ds\le \frac1{2\alpha\gamma^2}\|g\|^2_{L^2}.
$$
Using H\"older inequality
$$
\frac1t\int_0^t\|\rot\bar u(s)\|_{L^2}\,ds\le
 \(\frac1t\int_0^t\|\rot\bar u(s)\|^2_{L^2}\,ds\)^{1/2},
$$
we obtain~\eqref{timenotS2}.
\end{proof}

We now write equation \eqref{DEalpha}
as  an ODE in a Hilbert space with
bounded nonlinearity. Applying to \eqref{DEalpha}  the
operator
$$
A_\alpha:=(1-\alpha A)^{-1}\Pi
$$
where $A=\Pi\mathbf{\Delta}$ is the Stokes operator in $\Omega$ and $\Pi$ is
the Helmholtz--Leray projection, we obtain

\begin{equation}\label{1.ODE}
\partial_t\bar u+\gamma \bar u+B(\bar u,\bar u)=\bar g, \ \bar u\big|_{t=0}=\bar u_0,
 \end{equation}
where $B(\bar u,\bar v):=  A_{\alpha}\Pi\(\nabla_{\bar u}\bar v\)$,
$\bar g=(1-\alpha A)^{-1}\Pi g$.

Arguing as  in \cite{Lap70} and also using the  elliptic regularity for the
Stokes operator we see that $B$ is bounded from $\mathbf{H}^1$
to $\mathbf{H}^{2-\varepsilon}$, $\varepsilon>0$.

As a result, we have written  \eqref{DEalpha} as an
 an ODE in ${\bf H}^1$
with bounded nonlineariry. Therefore the local existence
and uniqueness of a solution as well as (an infinite) differentiability of the
corresponding local solution semigroup are straightforward corollaries of the
Banach contraction principle. The global existence  follows from   a priori estimates
obtained above, so that
we have proved the following theorem.

    \begin{theorem}\label{Th:exist} Let $\bar u_0\in {\bf H}^1(\Omega)$.
     Then there exists a unique global solution $\bar u\in C([0,\infty),{\bf H}^1)$
    of problem \eqref{1.ODE} (which is simultaneously the unique solution of
    \eqref{DEalpha}). In other words, a dissipative solution semigroup
$$
S(t):\mathbf{H}^1\to\mathbf{H}^1,\quad S(t)\bar u_0:= \bar u(t),\ \ t\ge0
$$
 is well defined. Moreover, $S(t)$ is
$C^\infty$-differentiable for every fixed~$t$.
\end{theorem}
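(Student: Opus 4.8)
The plan is to read \eqref{1.ODE} as an abstract ODE $\partial_t\bar u=F(\bar u)$ in the Hilbert space $\mathbf{H}^1$, with
$$
F(\bar u):=\bar g-\gamma\bar u-B(\bar u,\bar u),\qquad B(\bar u,\bar v)=A_{\alpha}\Pi(\nabla_{\bar u}\bar v),
$$
and to run the classical Picard--Banach machinery. The first step is to record that $F$ is a well-defined map $\mathbf{H}^1\to\mathbf{H}^1$ which is $C^\infty$ (in fact a polynomial of degree $2$, hence real-analytic) and locally Lipschitz on bounded sets: the affine part $\bar u\mapsto\bar g-\gamma\bar u$ is trivial, and $B$ is bounded bilinear because, as indicated right before the statement, it maps $\mathbf{H}^1\times\mathbf{H}^1$ into $\mathbf{H}^{2-\varepsilon}\hookrightarrow\mathbf{H}^1$. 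Bilinearity then yields, for $\bar u,\bar v$ in a ball of radius $R$,
$$
\|B(\bar u,\bar u)-B(\bar v,\bar v)\|_{\mathbf{H}^1}=\|B(\bar u-\bar v,\bar u)+B(\bar v,\bar u-\bar v)\|_{\mathbf{H}^1}\le C R\,\|\bar u-\bar v\|_{\mathbf{H}^1}.
$$

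The second step is local existence and uniqueness: for $\bar u_0\in\mathbf{H}^1$ the Picard map $\bar u(\cdot)\mapsto\bigl(t\mapsto\bar u_0+\int_0^tF(\bar u(s))\,ds\bigr)$ is a contraction on a small closed ball of $C([0,T];\mathbf{H}^1)$ once $T=T(\|\bar u_0\|_{\mathbf{H}^1})$ is small, giving a unique solution on a maximal interval $[0,T_{\max})$, with the usual blow-up alternative $T_{\max}<\infty\Rightarrow\limsup_{t\to T_{\max}}\|\bar u(t)\|_{\mathbf{H}^1}=\infty$. To upgrade to $T_{\max}=\infty$ I would invoke the dissipative estimate \eqref{energy} of Proposition~\ref{Prop:2.1}, which bounds $\|\bar u(t)\|_\alpha$ --- a norm equivalent to $\|\bar u\|_{\mathbf{H}^1}$ on the divergence-free subspace --- uniformly in $t$; since \eqref{energy} is stated for smooth solutions it must be re-derived for the $\mathbf{H}^1$-solution, e.g.\ by a routine Galerkin approximation, or by pairing \eqref{1.ODE} with $(1-\alpha A)\bar u$ and using Lemma~\ref{L:orth}. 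Global existence follows; setting $S(t)\bar u_0:=\bar u(t)$ we get a semigroup on $\mathbf{H}^1$ by uniqueness, and \eqref{energy} makes it dissipative with absorbing ball $\{\,\|\bar u\|_\alpha^2\le 2\gamma^{-2}\|g\|_{L^2}^2\,\}$. That this solution also solves \eqref{DEalpha} is immediate, since \eqref{1.ODE} was obtained by applying $A_\alpha=(1-\alpha A)^{-1}\Pi$ to \eqref{DEalpha}, an operation discarding only the pressure gradient, which is recovered from $\div\bar u=0$.

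For the $C^\infty$-differentiability of $S(t)$ in $\bar u_0$, the third step is the Banach-space version of smooth dependence of ODE solutions on their data: since $F\in C^\infty(\mathbf{H}^1,\mathbf{H}^1)$, the time-$t$ flow map is $C^\infty$ for each fixed $t\ge0$, with differential $v_0\mapsto D_{\bar u_0}S(t)v_0=v(t)$ given by the solution of the linearized equation
$$
\partial_t v+\gamma v+B(\bar u,v)+B(v,\bar u)=0,\qquad v\big|_{t=0}=v_0,
$$
a linear non-autonomous ODE in $\mathbf{H}^1$ whose coefficients are bounded on bounded time intervals (as $t\mapsto\bar u(t)\in\mathbf{H}^1$ is continuous and $B$ is bounded bilinear), hence globally well posed; the higher Fréchet derivatives $D^k_{\bar u_0}S(t)$ solve the iterated variational equations and are treated by the same induction (one may equally differentiate the Duhamel formula). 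Since $F$ is actually polynomial, $S(t)$ is in fact real-analytic in $\bar u_0$, which is stronger than claimed.

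The one step I regard as the genuine obstacle --- and the only non-routine one --- is the boundedness $B:\mathbf{H}^1\times\mathbf{H}^1\to\mathbf{H}^{2-\varepsilon}$ used everywhere above. For $\bar u,\bar v\in\mathbf{H}^1$ on the $2$-manifold one has $\bar u\in L^q$ for every $q<\infty$ and $\nabla\bar v\in L^2$, so $\nabla_{\bar u}\bar v\in L^{2-\delta}$ for every $\delta>0$ by Hölder, and $L^{2-\delta}\hookrightarrow\mathbf{H}^{-\varepsilon}$ by Sobolev embedding in dimension $2$; applying $A_\alpha$, which gains two derivatives, lands in $\mathbf{H}^{2-\varepsilon}$. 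The delicate part is the two-derivative elliptic gain of $(1-\alpha A)^{-1}$ when $\Omega\varsubsetneq\mathbb{S}^2$, where $A$ is the Dirichlet Stokes operator on $\Omega$: this needs up-to-the-boundary elliptic regularity for the stationary Stokes problem $\bar u-\alpha\mathbf{\Delta}\bar u+\nabla q=f$, $\div\bar u=0$, $\bar u|_{\partial\Omega}=0$, which is where a smoothness hypothesis on $\partial\Omega$ enters; this is exactly the part that proceeds ``as in \cite{Lap70}'' combined with Stokes elliptic regularity, and it is the place I would write out in detail.
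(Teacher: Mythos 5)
Your proposal is correct and follows essentially the same route as the paper: rewrite \eqref{DEalpha} as an ODE in $\mathbf{H}^1$ with a bounded (quadratic, hence $C^\infty$) nonlinearity, obtain local well-posedness and smoothness of the flow from the Banach contraction principle, and pass to global existence via the dissipative estimate \eqref{energy}. The only step the paper treats by reference (``arguing as in \cite{Lap70} and using elliptic regularity for the Stokes operator''), namely the boundedness of $B:\mathbf{H}^1\times\mathbf{H}^1\to\mathbf{H}^{2-\varepsilon}$, is exactly the one you correctly single out and sketch via H\"older, Sobolev embedding in dimension two, and the two-derivative gain of $(1-\alpha A)^{-1}$.
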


Concluding this section we construct the main object of our interest,
namely, the global attractor of the solution semigroup $S(t)$.

\begin{theorem}\label{Th:attr} The semigroup $S(t)$ has a global
attractor in $mathscr{A}\subset\mathbf{H}^1$ which, by definition, is a set
that is\newline
1) compact in $\mathbf{H}^1$, $\mathscr{A}\Subset\mathbf{H}^1$;\newline
2) strictly invariant
$S(t)\mathscr{A}=\mathscr{A}$;\newline
3) attracts bounded sets in
$\mathbf{H}^1$: for every bounded set $B\subset \mathbf{H}^1$ and
every neighborhood $\mathcal{O}(\mathscr{A})$
$$
S(t)B\subset \mathcal{O}(\mathscr{A})\ \text{for}\ t\ge T(B,\mathcal{O}(\mathscr{A})).
$$
\end{theorem}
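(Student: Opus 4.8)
The plan is to invoke the standard machinery for the existence of global attractors for dissipative semigroups, which requires three ingredients: (i) the semigroup $S(t)$ is continuous on $\mathbf{H}^1$ for each fixed $t$; (ii) it possesses a bounded absorbing set in $\mathbf{H}^1$; and (iii) it is asymptotically compact (or, more conveniently here, asymptotically smoothing). Item (i) is already furnished by Theorem~\ref{Th:exist}, which gives $C^\infty$-differentiability of $S(t)$, hence in particular continuity. So the real content is (ii) and (iii).

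\textbf{Absorbing set.} First I would extract a bounded absorbing ball directly from the dissipative energy estimate~\eqref{energy} of Proposition~\ref{Prop:2.1}. Since $\|\bar u\|_\alpha^2 = \|\bar u\|_{L^2}^2 + \alpha\|\rot\bar u\|_{L^2}^2$ is equivalent to the square of the $\mathbf{H}^1$-norm on the divergence-free subspace (using $\|\rot\bar u\|_{L^2}^2 + \|\div\bar u\|_{L^2}^2 = (-\mathbf{\Delta}\bar u,\bar u)$ together with the Poincar\'e inequality in the case $\Omega\subsetneq\mathbb{S}^2$, and the analogous spectral bound on $\mathbb{S}^2$), estimate~\eqref{energy} shows that the ball $\mathcal{B}_R = \{\bar u\in\mathbf{H}^1 : \|\bar u\|_\alpha^2 \le R^2\}$ with $R^2 > \gamma^{-2}\|g\|_{L^2}^2$ is absorbing: any bounded set $B$ enters $\mathcal{B}_R$ after a finite time $T(B)$ because the exponential term $\|\bar u(0)\|_\alpha^2 e^{-\gamma t}$ decays.

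\textbf{Asymptotic compactness.} This is the step I expect to be the main obstacle, though the ODE reformulation~\eqref{1.ODE} makes it quite tractable. The idea is to use the standard splitting $S(t)\bar u_0 = L(t)\bar u_0 + K(t)\bar u_0$, where $L(t)\bar u_0 = e^{-\gamma t}\bar u_0$ solves the linear part and $K(t)\bar u_0 = \int_0^t e^{-\gamma(t-s)}\big(\bar g - B(\bar u(s),\bar u(s))\big)\,ds$ collects the forcing and the nonlinear term. The term $L(t)\bar u_0$ is a uniform contraction, so it tends to zero on bounded sets. For $K(t)$, I would invoke the fact established just before Theorem~\ref{Th:exist}: by elliptic regularity for the Stokes operator, $B$ maps $\mathbf{H}^1$ boundedly into $\mathbf{H}^{2-\varepsilon}$ for some $\varepsilon>0$. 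Hence on the absorbing ball, $B(\bar u,\bar u)$ stays in a bounded subset of $\mathbf{H}^{2-\varepsilon}$, and the variation-of-constants integral lands in a bounded subset of $\mathbf{H}^{2-\varepsilon}$ as well; since the embedding $\mathbf{H}^{2-\varepsilon}\hookrightarrow\mathbf{H}^1$ is compact (both on the compact manifold $\mathbb{S}^2$ and on bounded domains $\Omega$ with the Dirichlet condition), $K(t)$ is a compact operator on $\mathbf{H}^1$, uniformly for $t$ in bounded intervals and for $\bar u_0$ in the absorbing ball. This gives the asymptotic smoothing property: $S(t)$ is, up to the exponentially small correction $L(t)$, eventually compact.

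\textbf{Conclusion.} With the absorbing set from the dissipative estimate, the continuity from Theorem~\ref{Th:exist}, and the asymptotic compactness from the compact-embedding argument above, the abstract theory on attractors (see, e.g., \cite{Lap70, arxiv} and the references therein) yields that $\mathscr{A} := \omega(\mathcal{B}_R) = \bigcap_{s\ge 0}\overline{\bigcup_{t\ge s}S(t)\mathcal{B}_R}$ is nonempty, compact in $\mathbf{H}^1$, strictly invariant under $S(t)$, and attracts every bounded set of $\mathbf{H}^1$ — precisely properties 1)--3). This completes the proof.
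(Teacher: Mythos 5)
Your proof is correct and follows essentially the same route as the paper: continuity and dissipativity from Theorem~\ref{Th:exist} and estimate~\eqref{energy}, followed by the splitting of $S(t)$ into the exponentially decaying linear part $e^{-\gamma t}\bar u_0$ and a Duhamel term that is uniformly bounded in $\mathbf{H}^{2-\varepsilon}$ thanks to the boundedness of $B:\mathbf{H}^1\to\mathbf{H}^{2-\varepsilon}$, which gives asymptotic compactness via the compact embedding. The only difference is that you spell out the absorbing-ball and norm-equivalence details that the paper leaves implicit.
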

\begin{proof}
The semigroup $S(t)$ is continuous and dissipative in view of~\eqref{energy}.
To apply a general and by now standard result on the existence of the attractor,
see, for instance, \cite{B-V,T} we only need to establish the asymptotic compactness
of $S(t)$. This is also achieved by the standard splitting of $S(t)$ into a exponentially
decaying part and a uniformly compact part:
$$
S(t)=\Sigma(t)+S_2(t),\quad S_2(t)=S(t)-\Sigma(t),
$$
where
$v(t)=\Sigma(t)\bar u_0$ is the decaying solution
of the linear equation
$$
\partial_t v+\gamma v=0, \ v(0)=\bar u_0,
$$
and $w(t)=S_2(t)\bar u_0$ is the solution of the equation
$$
\partial_tw+\gamma w=G(t):=-B(\bar u,\bar u)+\bar g,\ w(0)=0
$$
with zero initial condition and right-hand side  uniformly bounded in
$\mathbf{H}^{2-\varepsilon}$. Therefore $w$ is uniformly bounded in
$\mathbf{H}^{2-\varepsilon}$ and since $\bar u=v+w$ the asymptotic compactness of
the semigroup $S(t)$ is established.
\end{proof}

\setcounter{equation}{0}
\section{Upper bound for the dimension of the attractor}\label{S:bounds}

\begin{proof}[Proof of Theorem~\ref{Th:into-main}] The solution semigroup   $S(t): {\bf H}^1\to
{\bf H}^1$ is differentiable  with respect to the initial data 
 so we only need to estimate the global Lyapunov
 exponents for the linearization of equation
\eqref{1.ODE}
on the trajectories lying on the attractor. The linearized system is:
$$
 \begin{cases}
 \partial_t \bar\theta=-\gamma\bar\theta-B(\bar u(t),\bar\theta)-B(\bar\theta,\bar u(t)) =:L_{u(t)}\bar \theta,\\
\div \bar\theta=0,\  \bar\theta\big|_{t=0}=\bar\theta_0\in \mathbf H^1(\Omega),
\end{cases}
$$
where  $B(\bar u,\bar v):=(1-\alpha A)^{-1}\Pi\(\nabla_{\bar u}\bar v\)$.
It is convenient to define the  scalar product in  ${\bf H}^1$
induced  by the operator
$1-\alpha A$, namely,
\begin{equation}\label{scal-alpha}
(\bar\theta,\bar\xi)_\alpha=(\bar\theta,\bar\xi)+\alpha(\rot\bar\theta,\rot\bar\xi)=((1-\alpha A)\bar\theta,\bar\xi)
\end{equation}
 Then, using that $\Pi A_\alpha=A_\alpha$ and $\Pi\bar\theta=\bar\theta$, we obtain
\begin{equation}\label{=0}
\aligned
(B(\bar u,\bar\theta),\bar\theta)_\alpha=
\left((1-\alpha A)^{-1} \Pi\nabla_{\bar u}\,\bar\theta,(1-\alpha\Delta_x)\bar\theta\right)=\\=
\left((1-\alpha A)^{-1} \Pi\nabla_{\bar u}\,\bar\theta,(1-\alpha\Pi\Delta_x)\bar\theta\right)=\\=
\left(\Pi\nabla_{\bar u}\,\bar\theta,\bar\theta\right)=
(\nabla_{\bar u}\,\bar\theta,\bar\theta)\equiv0.
\endaligned
\end{equation}

Following the general strategy, see e.g. \cite{T}, the sums of the first $n$
global Lyapunov exponents, which control the dimension, can be estimated from
above by the following numbers:
$$
q(n):=\limsup_{t\to\infty}\sup_{u(t)\in\mathscr A}\sup_{\{\bar\theta_j\}_{j=1}^n}\frac1t\int_0^t
\sum_{j=1}^n(L_{u(\tau)}\bar\theta_j,\bar\theta_j)_\alpha d\tau,
$$
where the first (inner) supremum is taken over all
orthonormal families $\{\bar\theta_j\}_{j=1}^n$ with respect to the
scalar product \eqref{scal-alpha} in ${\bf H}^1$
and the second (middle) supremum  is taken over all   trajectories $u(t)$
on the attractor $\mathscr A$. Then,
using \eqref{=0} 
we obtain
$$
\aligned
\sum_{j=1}^n(L_{u(t)}\bar\theta_j,\bar\theta_j)_\alpha&=
-\sum_{j=1}^n\gamma\|\bar\theta_j\|^2_{\alpha}-
\sum_{j=1}^n\int_\Omega(\nabla_{\bar\theta_j}\bar u\cdot\bar\theta_j)dS\le\\&\le
 -\gamma n+\frac1{\sqrt{2}}\|\rot \bar u(t)\|_{L^2}\|\rho\|_{L^2},
\endaligned
$$
 where
$$
\rho(s)=\sum_{j=1}^n|\bar\theta_j(s)|^2,
$$
and where we used the following inequality special for the spherical geometry
(and similar to that in the 2D flat case), see~\cite[Lemma 3.2]{IMT}:
$$
\sum_{j=1}^n\int_{\Omega}
(\nabla_{v_j}u(s)\cdot v_j(s))dS\le
2^{-1/2}\|\rho\|_{L^2}\|\rot u\|_{L^2},\ \rho(s)=\sum_{j=1}^n|v_j(s)|^2,  \ \div u=0.
$$
We now use  estimate
 \eqref{S2alpha} from the  Appendix
and obtain
$$
\sum_{j=1}^n(L_{u(t)}\bar\theta_j,\bar\theta_j)_\alpha\le-\gamma n+
\frac1{2\sqrt {2\pi}}\frac{n^{1/2}}{\alpha^{1/2}}\| \rot\bar u(t)\|_{L^2}.
$$
Finally, using \eqref{timenotS2} and  \eqref{timeS2}
we arrive at
$$
q(n)\le-\gamma n +\frac{1}{2\sqrt{2\pi}}\frac{n^{1/2}}{{\alpha}^{1/2}}\cdot
\left\{
\aligned
&\frac1{\gamma}
\min\left(\|\rot g\|_{L^2},\ \frac{\|g\|_{L^2}}{\sqrt{2\alpha}}\right),
\quad \Omega=\mathbb{S}^2,\\
&\frac{\|g\|_{L^2}}{\gamma\sqrt{2\alpha}},\quad \Omega\varsubsetneq\mathbb{S}^2.
\endaligned
\right.
$$
It only remains to recall that, according to the general theory,
any number $n^*$  for which $q(n^*)<0$ is  an  upper bound both for
the Hausdorff \cite{B-V,T} and the fractal \cite{Ch-I2001,Ch-I}
dimension of the global attractor $\mathscr A$. This gives
estimate~\eqref{upper-main}.
\end{proof}

\setcounter{equation}{0}
\section{Appendix}\label{S:app}

In this  section we prove the following result that makes it
possible to write the estimates for the dimension of the attractor
in the explicit form.

\begin{theorem}\label{Th:A-Lieb-S2}
Let $\Omega\subseteq\mathbb{S}^2$ be a (curved) domain on
$\mathbb{S}^2$. Let a family  $\{v_j\}_{j=1}^n\in
\mathbf{H}^1_0(\Omega)$, $\div v_j=0$,  be  orthonormal in
$\mathbf{H}^1_0(\Omega)$ with respect to the scalar product
\begin{equation}\label{orth-m}
m^2(v_i,v_j)_{L^2}+(\rot v_i,\rot v_j)_{L^2}=\delta_{ij}.
\end{equation}
Then the function $\rho(s):=\sum_{j=1}^n|v_j(s)|^2$ satisfies the
inequality
\begin{equation}\label{LiebS2}
\|\rho\|_{L^2}\le\frac1{2\sqrt{\pi}}m^{-1}{n^{1/2}}.
\end{equation}
\end{theorem}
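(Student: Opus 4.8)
The plan is to establish \eqref{LiebS2} by the Lieb duality method. First I would reduce to $\Omega=\mathbb{S}^2$: if $v\in\mathbf{H}^1_0(\Omega)$ is divergence-free, its extension by zero to $\mathbb{S}^2$ lies in $\mathbf{H}^1(\mathbb{S}^2)$, stays divergence-free, and its $\rot$ is the zero-extension of $\rot v$, so both the normalization \eqref{orth-m} and the function $\rho$ (extended by zero) are preserved. Put $H:=m^2-\mathbf{\Delta}$ acting on divergence-free fields on $\mathbb{S}^2$, so $H\ge m^2>0$; by the integration-by-parts identity \eqref{byparts} (polarized) the hypothesis \eqref{orth-m} says exactly that $\{v_j\}_{j=1}^n$ is orthonormal for the inner product $(Hu,v)_{L^2}$. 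Since $L^2(\mathbb{S}^2)$ is self-dual and $\rho\ge0$, it suffices to bound $\int_{\mathbb{S}^2}\rho\,\phi\,dS$ for every $\phi\ge0$ with $\|\phi\|_{L^2}\le1$.

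Setting $w_j:=H^{1/2}v_j$ (an $L^2$-orthonormal system) and $S_\phi:=H^{-1/2}\phi\,H^{-1/2}\ge0$ (multiplication by the scalar $\phi$ acting componentwise), one has $\int_{\mathbb{S}^2}\rho\phi\,dS=\sum_{j=1}^n(\phi v_j,v_j)_{L^2}=\sum_{j=1}^n(S_\phi w_j,w_j)_{L^2}$. As $S_\phi$ is compact and non-negative and $\{w_j\}$ is orthonormal, this is at most the sum of the $n$ largest eigenvalues of $S_\phi$, whence by Cauchy--Schwarz
$$\int_{\mathbb{S}^2}\rho\phi\,dS\le\sum_{k=1}^n\lambda_k(S_\phi)\le\Bigl(n\sum_{k\ge1}\lambda_k(S_\phi)^2\Bigr)^{1/2}=\bigl(n\,\Tr S_\phi^2\bigr)^{1/2}.$$
Let $G(x,y)$ denote the endomorphism-valued kernel of $H^{-1}$ on divergence-free fields; self-adjointness gives $G(y,x)=G(x,y)^*$, so $\Tr S_\phi^2=\Tr\bigl((\phi H^{-1})^2\bigr)=\iint_{\mathbb{S}^2\times\mathbb{S}^2}\phi(x)\phi(y)\,\|G(x,y)\|_{\mathrm{HS}}^2\,dS(x)\,dS(y)$, and the Schur test for this symmetric non-negative kernel gives
$$\Tr S_\phi^2\le\|\phi\|_{L^2}^2\,\sup_{x}\int_{\mathbb{S}^2}\|G(x,y)\|_{\mathrm{HS}}^2\,dS(y)=\|\phi\|_{L^2}^2\,\operatorname{tr}K_{H^{-2}}(x_0,x_0),$$
with $K_{H^{-2}}$ the kernel of $H^{-2}$ and $\operatorname{tr}$ the fiberwise trace; by the homogeneity of $\mathbb{S}^2$ this quantity does not depend on $x_0$.

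To evaluate it I would use that, by Hodge theory (no harmonic $1$-forms on $\mathbb{S}^2$), the divergence-free fields on $\mathbb{S}^2$ are spanned by $\{\nabla^\perp Y_\ell^\mu\}_{\ell\ge1,\,|\mu|\le\ell}$, and a direct computation from \eqref{vecLap} gives $-\mathbf{\Delta}\nabla^\perp Y_\ell^\mu=\ell(\ell+1)\nabla^\perp Y_\ell^\mu$. Normalizing $V_\ell^\mu:=(\ell(\ell+1))^{-1/2}\nabla^\perp Y_\ell^\mu$ and invoking the addition theorem (which, after integrating $\sum_\mu|\nabla Y_\ell^\mu|^2$ over $\mathbb{S}^2$, yields $\sum_{|\mu|\le\ell}|V_\ell^\mu(x_0)|^2=\tfrac{2\ell+1}{4\pi}$), one obtains
$$\operatorname{tr}K_{H^{-2}}(x_0,x_0)=\sum_{\ell\ge1}\frac{1}{(m^2+\ell(\ell+1))^2}\sum_{|\mu|\le\ell}|V_\ell^\mu(x_0)|^2=\frac1{4\pi}\sum_{\ell\ge1}\frac{2\ell+1}{(m^2+\ell(\ell+1))^2}.$$
Combining the above, $\int\rho\phi\,dS\le\bigl(\tfrac{n}{4\pi}\sum_{\ell\ge1}\tfrac{2\ell+1}{(m^2+\ell(\ell+1))^2}\bigr)^{1/2}\|\phi\|_{L^2}$, so taking the supremum over $\phi$ reduces \eqref{LiebS2} to the elementary inequality $\sum_{\ell=1}^\infty(2\ell+1)(m^2+\ell(\ell+1))^{-2}\le m^{-2}$.

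I expect this last series estimate to be the main obstacle. I would prove it by comparison with $\int_0^\infty(2t+1)(m^2+t(t+1))^{-2}\,dt=m^{-2}$ (substitute $s=t(t+1)$); the comparison is not automatic, since the integrand increases up to $t\sim m/\sqrt3$ before decaying, so the low-frequency block must be treated separately --- e.g.\ writing the summand as $2s_\ell(b^2+s_\ell^2)^{-2}$ with $s_\ell=\ell+\tfrac12$, $b^2=m^2-\tfrac14$, locating its single inflection at $s_\ell=b$, and controlling the midpoint-rule error on the concave part. The bound is saturated as $m\to\infty$, which is why the constant $\tfrac1{2\sqrt\pi}$ in \eqref{LiebS2} is the natural one; apart from this and the spectral computation (Hodge decomposition plus the addition theorem on $\mathbb{S}^2$), the argument is the standard Lieb scheme of \cite{LiebJFA}.
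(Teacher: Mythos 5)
Your argument is, in its architecture, the paper's own: reduce to $\Omega=\mathbb{S}^2$ by zero extension, reinterpret \eqref{orth-m} via \eqref{byparts} as $L^2$-orthonormality of $\psi_j=(m^2-\mathbf{\Delta})^{1/2}v_j$, run Lieb's duality scheme through the Birman--Schwinger-type operator $H^{-1/2}\phi H^{-1/2}$, Cauchy--Schwarz against $\Tr\mathbf{K}^2$, and evaluate the spectral sum using the basis $(\ell(\ell+1))^{-1/2}\nabla^\perp Y_\ell^\mu$ together with the constancy of $\sum_\mu|\nabla Y_\ell^\mu|^2$. One genuine (and welcome) deviation: where the paper invokes the Araki--Lieb--Thirring inequality to pass from $\Tr\bigl(\Pi H^{-1/2}V H^{-1/2}\Pi\bigr)^2$ to $\Tr\bigl(V^2H^{-2}\Pi\bigr)$, you compute $\Tr S_\phi^2$ exactly as $\iint\phi(x)\phi(y)\|G(x,y)\|_{\mathrm{HS}}^2\,dS(x)dS(y)$ and then apply the Schur test; since the fiberwise trace of the kernel of $H^{-2}\Pi$ on the diagonal is constant on the homogeneous sphere, the Schur bound coincides with the paper's, so you obtain the same constant by a more elementary route (no need to cite \cite{Araki,LT,traceSimon}). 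Do make sure the middle Helmholtz--Leray projection is handled when you cycle the trace ($\Tr S_\phi^2=\Tr(\phi H^{-1}\Pi\,\phi H^{-1}\Pi)$, so $G$ must be the kernel of $H^{-1}\Pi$, which is what your "kernel of $H^{-1}$ on divergence-free fields" means); with that reading the identity is exact.

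The one real gap is the series inequality $\sum_{\ell\ge1}(2\ell+1)\bigl(m^2+\ell(\ell+1)\bigr)^{-2}\le m^{-2}$, which you correctly identify as the crux but leave as a sketch. This is precisely Proposition~\ref{Prop:seriesS2} of the paper, whose proof occupies about a page: the naive comparison with $\int_0^\infty$ fails because the integrand is not monotone, and the paper proceeds by a trapezoid rule at the nodes $a_n=(n-1)n/m^2$, reducing the claim to an explicit remainder bound $R(m)<\tfrac14$ verified by hand after a monotonicity reduction to $m\ge\sqrt2$. Your proposed substitute --- completing the square so the summand becomes $2s_\ell(b^2+s_\ell^2)^{-2}$ with $s_\ell=\ell+\tfrac12$, $b^2=m^2-\tfrac14$, and exploiting the single inflection at $s=b$ to control the midpoint-rule error on the concave block --- is a viable strategy (the available slack $m^{-2}-(m^2+\tfrac34)^{-1}\sim\tfrac34m^{-4}$ comfortably dominates the expected midpoint excess $\sim\tfrac1{24}\int_1^b|g''|\sim\tfrac1{10}m^{-4}$, and for $m^2\le\tfrac14$ the summand is globally convex so the comparison is immediate), but the uniform quantitative control for all $m$, including the interval containing the inflection point and moderate $m$, is not carried out. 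Until that computation is done, the constant $\tfrac1{2\sqrt\pi}$ in \eqref{LiebS2} is not established.
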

\begin{proof}
 We  first recall the basic facts concerning the spectrum of the
scalar Laplace operator $\Delta=\div\nabla$ on the sphere
$\mathbb{S}^{2}$:
\begin{equation}\label{harmonics}
-\Delta Y_n^k=n(n+1) Y_n^k,\quad
k=1,\dots,2n+1,\quad n=0,1,2,\dots.
\end{equation}
Here the $Y_n^k$ are the orthonormal real-valued spherical
harmonics and each eigenvalue $\Lambda_n:=n(n+1)$ has multiplicity $2n+1$.

The following identity is essential in what
follows: for any $s\in\mathbb{S}^{2}$
\begin{equation}\label{identity}
\sum_{k=1}^{2n+1}Y_n^k(s)^2=\frac{2n+1}{4\pi}.
\end{equation}
In the vector case  identity~\eqref{identity} is replaced by its
vector analogue:
\begin{equation}\label{identity-vec}
\sum_{k=1}^{2n+1}|\nabla Y_n^k(s)|^2=n(n+1)\frac{2n+1}{4\pi}.
\end{equation}

Turning to the proof we  first consider the  whole sphere
$\Omega=\mathbb{S}^2$.  Corresponding to the eigenvalue $\Lambda_n=n(n+1)$,
where $n=1,2,\dots$, there is a family of $2n+1$ orthonormal
vector-valued  eigenfunctions  $w_n^k(s)$ of the vector Laplacian
on the invariant space of divergence free
vector-functions, that is, the Stokes operator on $\mathbb{S}^2$
\begin{equation}\label{bases}
\aligned
w_n^k(s)&=(n(n+1))^{-1/2}\,\nabla^\perp Y_n^k(s),\ -\mathbf{\Delta}w_n^k=n(n+1)w_n^k,\
\div w_n^k=0;
\endaligned
\end{equation}
where $k=1,\dots,2n+1$,  and~\eqref{identity-vec} implies the
following identity:
\begin{equation}\label{id-vec}
\sum_{k=1}^{2n+1}|w_n^k(s)|^2=\frac{2n+1}{4\pi}.
\end{equation}
Let us define two operators
$$
 \mathbb{H}= V^{1/2}(m^2-\mathbf{\Delta})^{-1/2}\Pi,\quad  \mathbb{H}^*=\Pi(m^2-\mathbf{\Delta})^{-1/2}V^{1/2}
$$
acting in $T\mathbb{S}^2$, where $V\in L^1\cap L^\infty$ is a
non-negative scalar function  and $\Pi$ is the Helmholtz--Leray
projection.  Then ${\bf K}=
\mathbb{H}^*\mathbb{H}$ is a  compact self-adjoint operator acting
from $\mathbf{L}^2({\mathbb{S}}^2)$ to $\mathbf{L}^2({\mathbb{S}}^2)$ and
$$
\aligned
\Tr \mathbf{K}^2=&\Tr\left(\Pi(m^2-\mathbf{\Delta})^{-1/2}V(m^2-\mathbf{\Delta})^{-1/2}\Pi\right)^2\le\\&\le
\Tr\left(\Pi(m^2-\mathbf{\Delta})^{-1}V^2(m^2-\mathbf{\Delta})^{-1}\Pi\right)=
\Tr\left(V^2(m^2-\mathbf{\Delta})^{-2}\Pi\right),
\endaligned
$$
where we used
 the Araki--Lieb--Thirring inequality for traces \cite{Araki, LT, traceSimon}:
$$
\Tr(BA^2B)^p\le\Tr(B^pA^{2p}B^p),\quad p\ge1,
$$
and the cyclicity property of the trace together with the facts
that $\Pi$ commutes with the Laplacian and that $\Pi$ is a
projection: $\Pi^2=\Pi$. Using the basis of orthonormal
eigenfunctions of the Laplacian \eqref{bases} along with \eqref{id-vec}
in view of the key estimate~\eqref{ineqS2} proved  below we
find that
$$
\aligned
\operatorname{Tr} \mathbf{K}^2\le&
\operatorname{Tr}\left(V^2(m^2-\mathbf{\Delta})^{-2}\Pi\right)=\\=&
\frac1{4\pi}\sum_{n=1}^\infty\frac{2n+1}{\bigl(m^2+n(n+1)\bigr)^2}
\int_{\mathbb{S}^2}V^2(s)dS\le\frac1{4\pi}\frac1{m^2}\|V\|_{L^2}^2.
\endaligned
$$
We can now argue as in~\cite{LiebJFA}. We observe that
$$
\int_{\mathbb{S}^2}\rho(s)V(s)dS=\sum_{i=1}^n\|\mathbb{H}\psi_i\|^2_{L^2},
$$
where
$$
\psi_j=(m^2-\mathbf{\Delta})^{1/2}v_j,\quad j=1,\dots,n.
$$
Next, in view of \eqref{orth-m} the $\psi_j$'s are
orthonormal in $L^2$
and in view of  the variational
principle
$$
\sum_{i=1}^n\|\mathbb{H}\psi_i\|^2_{L^2}=\sum_{i=1}^n(\mathbf{K}\psi_i,\psi_i)
\le\sum_{i=1}^n\lambda_i,
$$
where $\lambda_i$ are the eigenvalues of the
operator $\mathbf{K}$. Therefore
$$
\aligned
\int_{\mathbb{S}^2}\rho(s)V(s)dS&\le\sum_{i=1}^n\lambda_i\le
n^{1/2}\left(\sum_{i=1}^n\lambda_i^2\right)^{1/2}\le\\&\le
n^{1/2}\left(\operatorname{Tr} \mathbf{K}^2\right)^{1/2}\le
\frac{n^{1/2}m^{-1}}{2\sqrt{\pi}}\|V\|_{L^2}.
\endaligned
$$
Setting $V(x):=\rho(x)$ we complete the proof of
\eqref{LiebS2} for $\Omega=\mathbb{S}^2$.

Finally, if $\Omega\varsubsetneq\mathbb{S}^2$ is a proper domain on $\mathbb{S}^2$,
we extend by zero the vector functions $v_j$ outside
$\Omega$ and denote the results by $\widetilde{v}_j$,
so that $\widetilde{v}_j\in {\bf H}^1(\mathbb{S}^2)$
and $\operatorname{div}\widetilde{v}_j=0$. We
further set
$\widetilde\rho(x):=\sum_{j=1}^n|\widetilde{v}_j(x)|^2$.
Then setting $\widetilde{\psi}_i:=(m^2-\mathbf{\Delta})^{1/2}\widetilde{v}_i$,
we see that the system $\{\widetilde\psi_j\}_{j=1}^n$ is orthonormal in
$L^2(\mathbb{S}^2)$ and
$\operatorname{div}\widetilde\psi_j=0$.
Since clearly $\|\widetilde\rho\|_{L^2(\mathbb{S}^2)}=\|\rho\|_{L^2(\Omega)}$,
the proof of the  estimate \eqref{LiebS2} reduces to the case of the whole sphere
and therefore is complete.
\end{proof}

A word for word translation of the above proof gives
a similar inequality in the scalar case, the only difference being
that we have to consider the case of the whole sphere and impose the
zero mean condition.
\begin{theorem}\label{Th:A-Lieb-S2-scal}
 Let a family  $\{\varphi_j\}_{j=1}^n\in
{H}^1(\mathbb{S}^2)$, $\int_{\mathbb{S}^2}\varphi_j(s)dS=0$ be  orthonormal
with respect to the scalar product
$$
m^2(\varphi_i,\varphi_j)_{L^2}+(\nabla \varphi_i,\nabla \varphi_j)_{L^2}=\delta_{ij}.
$$
Then the function $\rho(s):=\sum_{j=1}^n|\varphi_j(s)|^2$ satisfies the
inequality
$$
\|\rho\|_{L^2}\le\frac1{2\sqrt{\pi}}m^{-1}{n^{1/2}}.
$$
\end{theorem}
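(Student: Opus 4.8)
The plan is to transcribe, essentially line by line, the proof of Theorem~\ref{Th:A-Lieb-S2}, replacing the divergence-free constraint and the Helmholtz--Leray projection $\Pi$ throughout by the scalar orthogonal projection $P_0$ of $L^2(\mathbb{S}^2)$ onto functions with zero mean --- that is, the spectral projection of $-\Delta$ complementary to its one-dimensional kernel of constants. Since the constant harmonic $Y_0$ is thereby excluded, every spectral sum below runs over $n\ge1$, and this is the only point at which the zero-mean normalization is used.

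First I would fix a non-negative weight $V\in L^1\cap L^\infty(\mathbb{S}^2)$ and set
$$
\mathbb{H}:=V^{1/2}(m^2-\Delta)^{-1/2}P_0,\qquad \mathbb{H}^*:=P_0(m^2-\Delta)^{-1/2}V^{1/2},\qquad \mathbf{K}:=\mathbb{H}^*\mathbb{H},
$$
so that $\mathbf{K}$ is a compact non-negative self-adjoint operator on $L^2(\mathbb{S}^2)$. The central step is the bound for $\Tr\mathbf{K}^2$. Exactly as in the vector case --- applying the Araki--Lieb--Thirring inequality with $p=2$ to $B=P_0(m^2-\Delta)^{-1/2}$ and $A=V^{1/2}$, together with the cyclicity of the trace and the facts that $P_0$ commutes with $\Delta$ and $P_0^2=P_0$ --- one obtains $\Tr\mathbf{K}^2\le\Tr\bigl(V^2(m^2-\Delta)^{-2}P_0\bigr)$. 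Evaluating the last trace in the orthonormal basis $\{Y_n^k\}_{n\ge1}$ of scalar spherical harmonics and invoking the addition formula \eqref{identity}, which supplies precisely the factor $\tfrac{2n+1}{4\pi}$ --- the same factor that the vector identity \eqref{id-vec} supplied before --- yields
$$
\Tr\mathbf{K}^2\le\frac1{4\pi}\sum_{n=1}^\infty\frac{2n+1}{\bigl(m^2+n(n+1)\bigr)^2}\int_{\mathbb{S}^2}V^2(s)\,dS\le\frac1{4\pi m^2}\|V\|_{L^2}^2 ,
$$
the last inequality being the spectral sum estimate \eqref{ineqS2}.

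The remainder follows Lieb's scheme \cite{LiebJFA}. Setting $\psi_j:=(m^2-\Delta)^{1/2}\varphi_j$, the normalization hypothesis of the theorem says exactly that the $\psi_j$ are orthonormal in $L^2(\mathbb{S}^2)$; they have zero mean, so $P_0\psi_j=\psi_j$ and hence $\|\mathbb{H}\psi_j\|_{L^2}^2=(\mathbf{K}\psi_j,\psi_j)$. Therefore
$$
\int_{\mathbb{S}^2}\rho\,V\,dS=\sum_{j=1}^n\|\mathbb{H}\psi_j\|_{L^2}^2=\sum_{j=1}^n(\mathbf{K}\psi_j,\psi_j)\le\sum_{i=1}^n\lambda_i
$$
by the variational principle, where $\lambda_1\ge\lambda_2\ge\dots$ are the eigenvalues of $\mathbf{K}$; Cauchy--Schwarz together with the trace bound then gives $\sum_{i=1}^n\lambda_i\le n^{1/2}(\Tr\mathbf{K}^2)^{1/2}\le\frac{n^{1/2}m^{-1}}{2\sqrt\pi}\|V\|_{L^2}$. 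Choosing $V=\rho$ --- legitimate after the usual truncation/density argument, since $\rho\in L^2(\mathbb{S}^2)$ by the Sobolev embedding $H^1(\mathbb{S}^2)\hookrightarrow L^4(\mathbb{S}^2)$ --- we arrive at $\|\rho\|_{L^2}^2\le\frac{n^{1/2}m^{-1}}{2\sqrt\pi}\|\rho\|_{L^2}$, which is the asserted inequality.

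I do not expect a genuine obstacle: the argument is a transcription of the proof of Theorem~\ref{Th:A-Lieb-S2}. The only two things worth verifying are that $P_0$, as the spectral projection of $-\Delta$ away from its kernel, commutes with $\Delta$ and is an orthogonal projection --- so that the Araki--Lieb--Thirring and cyclicity manipulations go through verbatim --- and that the scalar addition identity \eqref{identity} produces the same constant $\tfrac{2n+1}{4\pi}$ as the vector identity \eqref{id-vec}, whence the trace bound, and with it the final constant $\tfrac1{2\sqrt\pi}$, are identical in the two cases. One should also note why no proper subdomain $\Omega\subsetneq\mathbb{S}^2$ appears in this scalar statement: extension by zero from such an $\Omega$ does not preserve the zero-mean condition, so --- in contrast with Theorem~\ref{Th:A-Lieb-S2} --- the argument cannot be reduced to the full sphere without already imposing that condition on $\mathbb{S}^2$.
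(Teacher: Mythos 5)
Your proposal is correct and is exactly what the paper intends: its own ``proof'' of Theorem~\ref{Th:A-Lieb-S2-scal} is the one-line remark that the argument is a word-for-word translation of the proof of Theorem~\ref{Th:A-Lieb-S2}, with $\Pi$ replaced by the zero-mean projection and the scalar addition formula \eqref{identity} supplying the same factor $\tfrac{2n+1}{4\pi}$ as \eqref{id-vec}. Your observation about why the zero-mean condition (rather than restriction to a subdomain) is the right normalization here matches the paper's subsequent remark on $-\mathbf{\Delta}\ge 2I$ versus $-\Delta\ge 0$.
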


\begin{remark}
{\rm
The difference in the formulations of Theorems~\ref{Th:A-Lieb-S2} and
\ref{Th:A-Lieb-S2-scal}
is due to the fact that the vector Laplacian
is positive $-\mathbf{\Delta}\ge2I$,
while the scalar Laplacian $-\Delta\ge0$ is just non-negative.
}
\end{remark}

In terms of the scalar product \eqref{alpha-orth}
estimate~\eqref{LiebS2} is as follows.

\begin{corollary}\label{Cor:mtoalpha2} Under the assumptions
of Theorem \ref{Th:A-Lieb-S2}  let the family $\{v_j\}_{j=1}^n$,
$\operatorname{div}v_j=0$, be
 orthonormal  with respect~to
\begin{equation}\label{alpha-orth}
(v_i,v_j)_{L^2}+\alpha
(\rot v_i,\rot v_j)_{L^2}=\delta_{ij}.
\end{equation}
Then the function $\rho(s)=\sum_{j=1}^n|v_j(s)|^2$ satisfies
\begin{equation}\label{S2alpha}
\aligned
\|\rho\|_{L^2}\le\frac1{2\sqrt{\pi}}\frac{n^{1/2}}{\alpha^{1/2}}\,.
\endaligned
\end{equation}
\end{corollary}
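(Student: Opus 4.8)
The plan is to deduce Corollary~\ref{Cor:mtoalpha2} from Theorem~\ref{Th:A-Lieb-S2} by a simple rescaling of the scalar product. The two bilinear forms differ only by an overall multiplicative constant: writing $\alpha=m^{-2}$, i.e. $m=\alpha^{-1/2}$, we have
$$
(v_i,v_j)_{L^2}+\alpha(\rot v_i,\rot v_j)_{L^2}
=\alpha\Bigl(m^2(v_i,v_j)_{L^2}+(\rot v_i,\rot v_j)_{L^2}\Bigr).
$$
Hence a family $\{v_j\}_{j=1}^n$ that is orthonormal with respect to \eqref{alpha-orth} becomes, after the dilation $v_j\mapsto \alpha^{1/2}v_j$, a family orthonormal with respect to \eqref{orth-m} with this particular choice of $m$.

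Concretely, I would set $u_j:=\alpha^{1/2}v_j$ and $m:=\alpha^{-1/2}$. Then $m^2(u_i,u_j)_{L^2}+(\rot u_i,\rot u_j)_{L^2}=\alpha^{-1}\alpha\,(v_i,v_j)_{L^2}+\alpha(\rot v_i,\rot v_j)_{L^2}=\delta_{ij}$, so the hypotheses of Theorem~\ref{Th:A-Lieb-S2} are met (divergence-freeness and the boundary/zero-mean conditions are unaffected by scaling). Applying \eqref{LiebS2} to the family $\{u_j\}$ with this $m$ gives
$$
\Bigl\|\sum_{j=1}^n|u_j(s)|^2\Bigr\|_{L^2}\le\frac1{2\sqrt{\pi}}\,m^{-1}n^{1/2}
=\frac1{2\sqrt{\pi}}\,\alpha^{1/2}n^{1/2}.
$$
Since $\sum_{j=1}^n|u_j(s)|^2=\alpha\sum_{j=1}^n|v_j(s)|^2=\alpha\,\rho(s)$, the left-hand side equals $\alpha\|\rho\|_{L^2}$, and dividing by $\alpha$ yields exactly \eqref{S2alpha}.

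There is essentially no obstacle here; the only point to be careful about is bookkeeping the powers of $\alpha$ — in particular that $\rho$ scales quadratically in the $v_j$ while the normalization scales linearly, which is what turns the factor $m^{-1}=\alpha^{1/2}$ on the right into the claimed $\alpha^{-1/2}$ after dividing through. (The same one-line rescaling argument, of course, also recasts Theorem~\ref{Th:A-Lieb-S2-scal} in the $\alpha$-normalized form, should that be needed.)
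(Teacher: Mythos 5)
Your rescaling argument is correct and is exactly what the paper intends: the corollary is stated as an immediate restatement of Theorem~\ref{Th:A-Lieb-S2} with $m=\alpha^{-1/2}$ (the paper gives no separate proof), and your bookkeeping of the powers of $\alpha$ — the normalization scaling linearly in the $v_j$ while $\rho$ scales quadratically — is right and yields \eqref{S2alpha}.
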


\begin{proposition}\label{Prop:seriesS2}
The following inequality holds for   $m>0$
\begin{equation}
\label{ineqS2}
F(m):=m^2\sum_{n=1}^\infty\frac {2n+1}{\bigl(m^2+n(n+1)\bigr)^2}<1.
\end{equation}
\end{proposition}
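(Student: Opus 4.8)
The plan is to prove the inequality $F(m) := m^2\sum_{n=1}^\infty \frac{2n+1}{(m^2+n(n+1))^2} < 1$ for all $m > 0$ by splitting the analysis into a comparison/integral-estimate for large $m$ and a direct check for bounded $m$, though I will look for a single clean argument first. The key observation is that $n(n+1) = (n+\tfrac12)^2 - \tfrac14$, so writing $x = n+\tfrac12$ and $c^2 = m^2 - \tfrac14$ (valid once $m \ge \tfrac12$) the general term becomes $\frac{2x}{(c^2 + x^2)^2}$, which is exactly $-\frac{d}{dx}\frac{1}{c^2+x^2}$. This telescoping-in-disguise structure suggests comparing the sum to the integral $\int_{1}^\infty \frac{2x}{(c^2+x^2)^2}\,dx = \frac{1}{c^2+1}$.

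First I would establish monotonicity: the function $h(x) = \frac{2x}{(c^2+x^2)^2}$ is eventually decreasing (for $x$ large relative to $c$), but near $x=0$ it increases, so I must be careful about where the comparison $\sum \le \int$ is valid. Since our sum starts at $n=1$, i.e. $x = 3/2$, I would check that $h$ is decreasing on $[3/2,\infty)$ provided $c$ is not too large; $h'(x)$ has the sign of $c^2 + x^2 - 4x^2 = c^2 - 3x^2$, so $h$ is decreasing precisely when $x^2 > c^2/3$, i.e. for all $x \ge 3/2$ when $c^2 \le 27/4$, equivalently $m^2 \le 7$. In that regime the standard integral comparison gives $\sum_{n=1}^\infty h(n+\tfrac12) \le \int_{1}^\infty h(x)\,dx = \frac{1}{c^2+1} = \frac{1}{m^2 + 3/4}$, whence $F(m) \le \frac{m^2}{m^2 + 3/4} < 1$. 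For the complementary range I can afford a cruder bound: when $m^2 > 7$ I would bound the first few terms individually and the tail by an integral starting far enough out that $h$ is monotone there; since the whole sum is being multiplied by $m^2$ but the terms decay like $n^{-3}$ and the weight $m^{-4}$ (hidden in $(m^2+n(n+1))^{-2}$) dominates, the resulting bound is comfortably below $1$ — one can even just note $F(m) \le m^2 \sum_{n=1}^\infty \frac{2n+1}{(m^2)^2} \cdot$ (something) is too lossy, so instead split off $n=1$ and estimate $\sum_{n\ge 2}$ against $\int_{3/2}^\infty h$.

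Alternatively — and this may be the cleanest route — I would avoid case splitting by using the sharper comparison $\sum_{n=1}^\infty h(n+\tfrac12) \le \int_{1}^\infty h(x)\,dx$ which holds whenever $h$ is \emph{convex} on $[1,\infty)$ (midpoint rule underestimate), or by the even simpler bound $h(n+\tfrac12) \le \int_{n}^{n+1} h(x)\,dx$ valid when $h$ is convex on $[n,n+1]$; checking convexity of $h$ amounts to a sign condition on a quadratic in $x^2$ and again holds for $x \ge 1$ under a mild restriction on $c$. If the convexity window does not cover all $m$, the honest fallback is the two-regime argument above.

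The main obstacle I anticipate is the non-monotonicity of $h(x) = 2x/(c^2+x^2)^2$ near the origin when $m$ (hence $c$) is large: for $m \gg 1$ the peak of $h$ sits at $x = c/\sqrt3 \gg 3/2$, so the naive inequality $\sum_{n\ge 1} h(n+\tfrac12) \le \int_1^\infty h$ fails and one genuinely must either peel off the terms with $n+\tfrac12 \lesssim c$ and estimate them separately (there are $O(c)$ of them, each of size $O(c^{-3})$, contributing $O(c^{-2})$, and $m^2 \cdot O(c^{-2}) = O(1)$ with a constant that needs to be checked to be $<1$) or argue more carefully. I expect that pushing the constant below $1$ in the large-$m$ regime requires a slightly careful — but still elementary — estimate rather than a one-line bound, and that is where the real work of the proof lies.
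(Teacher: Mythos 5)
Your reformulation is attractive and is in fact a close cousin of the paper's argument: writing $n(n+1)=(n+\tfrac12)^2-\tfrac14$ turns the general term into $h(n+\tfrac12)$ with $h(x)=2x/(c^2+x^2)^2=-\tfrac{d}{dx}(c^2+x^2)^{-1}$, and comparing the sum to $\int_0^\infty h=1/c^2$ is a midpoint-rule version of the trapezoid-rule comparison the paper performs after the substitution $a_n=(n-1)n/m^2$. However, there are two problems. First, a local error: monotonicity of $h$ does \emph{not} give $h(n+\tfrac12)\le\int_n^{n+1}h$; that is a midpoint-rule statement and requires convexity, and $h''(x)$ has the sign of $x^2-c^2$, so $h$ is \emph{concave} (midpoint overestimates) precisely on the dangerous range $x<c$. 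An exact computation shows that $h(n+\tfrac12)\le\int_n^{n+1}h$, i.e. $\bigl(c^2+(n+\tfrac12)^2\bigr)^2\ge(c^2+n^2)\bigl(c^2+(n+1)^2\bigr)$, holds iff $c^2\le n(n+1)+\tfrac18$, i.e. $m^2\le n(n+1)+\tfrac38$. So your clean bound $F(m)\le m^2/(m^2+\tfrac34)<1$ is valid only for $m^2\le 19/8$, not $m^2\le 7$: already at $m^2=7$, $n=1$ one has $h(3/2)=1/27>\int_1^2h=3/(7.75\cdot10.75)$.

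Second, and more seriously, the large-$m$ regime --- which you explicitly defer as ``where the real work of the proof lies'' --- is not carried out, and it is the crux. Since $F(m)=1-\tfrac{2}{3m^2}+O(m^{-4})$, the inequality is asymptotically saturated: the total midpoint-rule excess from the $O(c)$ concave intervals is itself of size $O(m^{-2})$ relative to the deficit, so there is no order-of-magnitude room and your heuristic ``$m^2\cdot O(c^{-2})=O(1)$ with a constant to be checked'' cannot be closed by a crude peeling argument (bounding each of the $O(c)$ offending terms by its maximum gives a constant exceeding $1$). This is exactly where the paper invests its effort: it reduces the claim to the quantitative inequality $R(m)<\tfrac14$ for the accumulated trapezoid excess, restricts to $m\ge\sqrt2$ using $F'(m)>0$ on $[0,\sqrt2]$, bounds the denominators by $\tfrac{64}{81}(1+n^2/m^2)^4$, and controls $\tfrac1m\sum_n g(n/m)$ with $g(x)=x^3/(1+x^2)^4$ by $x_0g(x_0)+\int_{x_0}^\infty g=\tfrac{1375}{12288}$, ending with the numerical margin $\tfrac{81}{64}\cdot\tfrac{1375}{12288}\approx0.142<\tfrac14$. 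Your proposal as written is a correct identification of the structure and of the difficulty, but the decisive estimate is missing.
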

\begin{proof}
We write $F(m)$ as follows
$$
F(m)=\frac1{m^2}\sum_{n=1}^\infty (2n+1)f\left(\frac{n(n+1)}{m^2}\right),
\quad f(t)=\frac1{(1+t)^2}\,.
$$
The following asymptotic expansion holds for $F(m)$
(see \cite[Lemma~3.5]{IZ})
$$
F(m)=\int_0^\infty f(t)dt-\frac1{m^2}\frac23f(0)
+O(m^{-4}), \quad\text{as} \ m\to\infty.
$$
which in view of $f(0)=1$, and $\int_0^\infty f(x)dx=1$ gives that
$$
F(m)=1-\frac1{m^2}\frac23+O(m^{-4}).
$$
This shows that~\eqref{ineqS2} holds for all $m\in[m_0,\infty)$,
where $m_0$ is sufficiently large. A general method for proving
this type of inequalities is to somehow specify the value of $m_0$
and show that~\eqref{ineqS2} also holds on $m\in[0,m_0]$ by a
reliable computer calculation. However, in this specific case we
can prove inequality~\eqref{ineqS2} completely rigorously, and for
this purpose we use a refinement of the method proposed in
\cite{I99JLMS,I12JST}.

Let
$$
a_1=0,\
a_n=a_n(m):=\frac {(n-1)n}{m^2},\ \ n=2,\dots\,.
$$
Then
$$
\aligned
m^{-2}\sum_{n=1}^\infty n f(n(n+1)/m^2)
&=\frac 12
\sum_{n=1}^\infty f(a_{n+1})(a_{n+1}-a_n),\\
m^{-2}\sum_{n=1}^\infty(n+1)f(n(n+1)/m^2)
&=\frac 12
\sum_{n=1}^\infty f(a_{n+1})(a_{n+2}-a_{n+1}).
\endaligned
$$
Therefore
$$
F(m)=
\frac 12 f(a_2)(a_2-a_1)+\sum\limits_{n=2}^\infty
\frac {f(a_n)+f(a_{n+1})}2\,(a_{n+1}-a_n),
$$
and inequality~\eqref{ineqS2} is equivalent to
$$
\aligned
\frac 12 f(a_2)(a_2-a_1)+\sum\limits_{n=2}^\infty
\frac {f(a_n)+f(a_{n+1})}2\,(a_{n+1}-a_n)<\\<
\sum_{n=1}^\infty\int_{a_n}^{a_{n+1}}f(x)dx=
\int_{a_1}^{a_{2}}f(x)dx+\sum_{n=2}^\infty\int_{a_n}^{a_{n+1}}f(x)dx=1,
\endaligned
$$
or, equivalently, to
$$
\aligned
\int_{a_1}^{a_{2}}f(x)dx-
\frac 12 f(a_2)(a_2-a_1)>\\>\sum\limits_{n=2}^\infty
\left(
\frac {f(a_n)+f(a_{n+1})}2\,(a_{n+1}-a_n)-
\int_{a_n}^{a_{n+1}}f(x)dx\right)=:\sum_{n=2}^\infty R_n(m).
\endaligned
$$
Next, for $f(x)=1/(1+x)^2$ we have
$$
\int_{a_1}^{a_{2}}f(x)dx-
\frac 12 f(a_2)(a_2-a_1)=\frac{m^2+4}{(m^2+2)^2}
$$
and
$$
\frac {f(a)+f(b)}2\,(b-a)-
\int_{a}^{b}f(x)dx=
\frac12\frac{(b-a)^3}{(1+a)^2(1+b)^2}\,.
$$
Therefore
$$
R_n(m)=\frac4{m^2}\frac1m
\frac{( n/m)^3}{\bigl((1+n(n-1)/m^2\bigr)^2\bigl(1+n(n+1)/m^2\bigr)^2},
$$
and we see that inequality \eqref{ineqS2} holds if and only if
$$
\aligned
m^2\left[\frac{m^2+4}{(m^2+2)^2}+R_1(m)\right]=m^2\left[\frac{m^2+4}{(m^2+2)^2}+\frac4{m^2}\frac1{(m^2+2)^2}\right]=1>\\>
4\frac1m\sum_{n=1}^\infty
\frac{( n/m)^3}{\bigl((1+n(n-1)/m^2\bigr)^2\bigl(1+n(n+1)/m^2\bigr)^2},
\endaligned
$$
that is,
$$
R(m):=
\frac1m\sum_{n=1}^\infty
\frac{( n/m)^3}{\bigl((1+n(n-1)/m^2\bigr)^2\bigl(1+n(n+1)/m^2\bigr)^2}<\frac14.
$$
Next, we find a lower bound for the denominator in terms of a
function depending on $n/m$.  Taking into account  that
$$
F'(m)=2m\sum_{n=1}^\infty\frac{(2n+1)(n(n+1)-m^2)}{\bigl(m^2+n(n+1)\bigr)^3}>0
$$
for $m\in[0,\sqrt{2}]$, we see that it suffices to prove
inequality~\eqref{ineqS2} for $m\in[\sqrt{2},\infty)$.

The largest constant $k$ in the inequality
$$
\frac{\bigl((m^2+n(n-1)\bigr)^2\bigl(m^2+n(n+1)\bigr)^2}{m^8}\ge k\left(1+\frac{n^2}{m^2}\right)^4
$$
holding for $m^2\ge2$ and $n\in\mathbb{N}$ satisfies
$$
\sqrt{k}=1-\max_{m^2\ge2,\ n\in\mathbb{N}}\frac{n^2}{(m^2+n^2)^2}=
1-\max_{n\in\mathbb{N}}\frac{n^2}{(2+n^2)^2}=\frac89.
$$
This gives that for $m\ge\sqrt{2}$
$$
R(m)<\frac{81}{64}\frac1m\sum_{n=1}^\infty g(n/m),\quad g(x)=\frac{x^3}{(1+x^2)^4}\,.
$$

 The function $g(x)$ has a  global maximum  at
$x_0=\left(\frac3{5}\right)^{1/2}$ and is decreasing for $x>x_0$. Therefore it is geometrically
clear that for all $m>0$
$$
\aligned
 \frac1m\sum_{n=1}^\infty
g(n/m)< x_0g(x_0)+\int_{x_0}^\infty g(x)dx=
\frac{225}{4096}+\frac{175}{3072}=\frac{1375}{12288}.
\endaligned
$$
The proof is now complete, since  for $m\ge\sqrt{2}$
$$
R(m)<\frac{81}{64}\frac1m\sum_{n=1}^\infty g(n/m)<\frac{81}{64}\frac{1375}{12288}=\frac{37125}{262144}=0.14162<\frac14.
$$
\end{proof}


\end{document}